\documentclass[reqno]{amsart}
\usepackage{amssymb,amscd,verbatim, amsthm,graphics, color,latexsym,amsmath,multicol}
\usepackage{fancybox}
\usepackage{longtable}

\usepackage[all]{xy}

\usepackage[top=2.7cm, bottom=2.7cm, left=2.7cm, right=2.7cm]{geometry}

\newcommand \C[1]{{\mathcal #1}}

\newcommand \wti[1]{{\widetilde {#1}}}

\newcommand \bC{{\mathbb C}}

\newcommand \bH{{\mathbb H}}

\newcommand \bZ{{\mathbb Z}}

\newcommand \bQ{{\mathbb Q}}

\newcommand\CF{{\C F}}
\newcommand\CH{{\C H}}

\newcommand\CS{{\C S}}

\newcommand\CX{{\C X}}

\newcommand\CR{{\C R}}

\newcommand\CY{{\C Y}}

\newtheorem{theorem}{Theorem}[section]
\newtheorem{conjecture}[theorem]{Conjecture}
\newtheorem{corollary}[theorem]{Corollary}
\newtheorem{lemma}[theorem]{Lemma}
\newtheorem{proposition}[theorem]{Proposition}
\newtheorem{definition}[theorem]{Definition}

\newtheorem{example}[theorem]{Example}

\newcommand\Ind{\operatorname{Ind}}

\newcommand\sgn{\mathsf{sgn}}

\newcommand\sem{\mathsf{ss}}
\newcommand\Irr{\mathsf{Irr}}

\newcommand\el{\mathsf{ell}}
\newcommand\res{\mathsf{res}}

\newcommand\un{\mathsf{un}}
\newcommand\pa{\mathsf{par}}

\newcommand\Id{\operatorname{Id}}

\newcommand\ad{\operatorname{ad}}

\newcommand\proj{\operatorname{proj}}
\newcommand\FT{\operatorname{FT}}

\def\<{\langle} 
\def\>{\rangle}

\numberwithin{equation}{section}

\begin{document}

\title[The elliptic nonabelian Fourier transform for exceptional $p$-adic groups]{The nonabelian Fourier transform for elliptic unipotent representations of exceptional $p$-adic groups}

\author
{Dan Ciubotaru}
        \address[D. Ciubotaru]{Mathematical Institute, University of Oxford, Oxford OX2 6GG, UK}
        \email{dan.ciubotaru@maths.ox.ac.uk}

\begin{abstract} We define an involution on the space of elliptic unipotent Langlands parameters of a reductive $p$-adic group $G$ and verify that when $G$ is split adjoint exceptional, the composition of this involution with the hyperspecial parahoric restriction map agrees with  Lusztig's nonabelian Fourier transform for unipotent representations of the finite reductive  quotient. This is inspired by recent works of Lusztig on the almost unipotent characters of $p$-adic groups and of M\oe glin and Waldspurger  on the elliptic Fourier transform of odd orthogonal groups.
\end{abstract}

\keywords{nonabelian Fourier transform, unipotent representations, elliptic representations}


\subjclass[2010]{22E50, 20C33}

\maketitle

\setcounter{tocdepth}{1}
\tableofcontents

\section{Introduction}  Let $\mathsf k$ be a nonarchimedean local field with residual cardinality $q$. Let $G$ be the $\mathsf k$-points of a connected simple adjoint group defined over $\mathsf k$. If $K$ is a parahoric subgroup of $G$, denote by $U_K$ its pro-unipotent radical such that $K/U_K$ is a finite reductive group. An irreducible smooth representation $(\pi,V)$ of $G$ is called unipotent \cite{L1} if there exists a parahoric subgroup $K$ such that the invariants $V^{U_K}$ contain an irreducible unipotent representation of $K/U_K$. Let $\Irr_\un G$ denote the set of isomorphism classes of irreducible unipotent $G$-representations and let $R_\un(G)$ denote the $\bC$-span  of $\Irr_\un G$ (i.e., the complexification of the Grothendieck group of the abelian category of smooth unipotent $G$-representations). Let $G^\vee$ be the complex dual Langlands group with centre $Z$.  If $x\in G^\vee$, let $Z_{G^\vee}(x)$ denote the centraliser of $x$ in $G^\vee$ and $A_x=Z_{G^\vee}(x)/Z_{G^\vee}(x)^0$ the finite group of components. The results of \cite{L1}, which extend the Iwahori-spherical correspondence of \cite{KL}, give a natural one-to-one correspondence 
\begin{equation}\label{e:DLL}
 G^\vee\backslash \{(x,\phi)\mid x\in G^\vee,\ \phi\in \widehat A_x\}\longleftrightarrow\bigsqcup_{G'}~\Irr_\un G',\quad (x,\phi)\mapsto (\pi_{(x,\phi)},V_{(x,\phi)}),
\end{equation}
where the union in the right hand side is over the groups $G'$ which are in the same inner class as $G$. In this correspondence, $\Irr_\un G$ are parameterised by $(x,\phi)$ such that $\phi |_Z=\Id$.

Let $\overline R_\un(G)$ be the elliptic representation space, which is the quotient of $R_\un(G)$ by the subspace spanned by all properly parabolically induced representations. This is endowed with a nondegenerate hermitian pairing, called the Euler-Poincar\'e pairing, see \cite{SS, Re1}. The elliptic space has a natural parameterisation in terms of Langlands parameters \cite{Re1} as follows. For $x\in G^\vee$, fix $T_x$ a maximal connected torus in  $Z_{G^\vee}(x)^0$ and let $\mathfrak t_x$ be the Lie algebra of $T_x$. The group $A_x$ acts of $T_x$ and on $\mathfrak t_x$, and let $\overline R(A_x)$ denote the corresponding elliptic representation complex space endowed with the (nondegenerate) elliptic pairing and $A_{x,\el}$ the set of elliptic elements of $A_x$. Then the parameterisation (\ref{e:DLL}) induces a isometric isomorphism 
\begin{equation}
\bigoplus_{G'} \overline R_\un (G') \longrightarrow \CX(G^\vee)_\el:=\bigoplus_{x\in G^\vee} \overline R(A_x),
\end{equation}
where the sum in the right hand side is over conjugacy classes in $G^\vee$. If one restricts to $ \overline R_\un (G)$ in the left hand side, then the corresponding space in the right hand side is $\CX(G^\vee)_\el^0:=\bigoplus_{x\in G^\vee} \overline R(A_x^{\ad})$, where $A_x^{\ad}=Z_{G^\vee}(x)/Z_{G^\vee}(x)^0 Z$. This was proved by Reeder \cite{Re1} in the case of Iwahori-spherical $G$-representations, and the results of  Waldspurger \cite{Wa2} allow one to generalise the isomorphism to unipotent representations. 

The space $ \overline R(A_x)$ is zero unless $x=su$ (Jordan decomposition) is such that the semisimple element $s$ is isolated, meaning that $Z_{G^\vee}(s)$ is a semisimple group, and $u$ is quasi-distinguished in $Z_{G^\vee}(s)$ (in the sense of \cite{Re1}). Consequently, we can reorganise the elliptic parameter space as 
\begin{equation}\label{decomp}
\CX(G^\vee)_\el=\bigoplus_{u\in G^\vee,\text{ unipotent} } \CX(G^\vee)_\el^u,\quad \CX(G^\vee)_\el^u=\bigoplus_{s\in Z_{G^\vee}(u),\text{ semisimple}} \overline R(A_{su}).
\end{equation}
The first sum is over $G^\vee$-conjugacy classes of unipotent elements $u\in G^\vee$ and the second sum is over $Z_{G^\vee}(u)$-conjugacy classes of semisimple elements $s\in Z_{G^\vee}(u)$. 

\medskip

The following construction appears in \cite{Wa1}, when $G^\vee$ is the symplectic group (and necessarily the semisimple elements below have order at most $2$), but the generalisation is straight-forward. Denote
\begin{equation}
\Gamma_u=\text{ the reductive part of } Z_{G^\vee}(u).
\end{equation}
\begin{definition} 
Define the set 
\[\C Y(\Gamma_u)_\el=\{(s,h)\in \Gamma_u\times \Gamma_u\mid s,h\text{ semisimple in }G^\vee,\ sh=hs,\ Z_{\Gamma_u}(s,h)\text{ is finite}\}.
\]
Since $s,h$ are commuting semisimple elements, $Z_{\Gamma_u}(s,h)=Z_{\Gamma_u}(s)\cap Z_{\Gamma_u}(h)$ is a reductive group, so the condition is equivalent to saying that there is no nontrivial torus in $\Gamma_u$ centralised by both $s$ and $h$.
\end{definition}

Using (\ref{decomp}), it is easy to see that we can naturally identify $\CX(G^\vee)^u_\el$ with $\Gamma_u$-equivariant functions on $\C Y(\Gamma_u)_\el$:
\begin{equation}
\CX(G^\vee)^u_\el=\bC[\C Y(\Gamma_u)_\el]^{\Gamma_u}=\bC[\Gamma_u\backslash \C Y(\Gamma_u)_\el].
\end{equation}

\

For every pair $(s,h)\in \Gamma_u\backslash \C Y(\Gamma_u)_\el$, define the virtual representation:
\begin{equation}
\pi(u,s,h)=\sum_{\phi\in \widehat A_{su}} \overline{\phi(h)}~ \pi(su,\phi).
\end{equation}
These are virtual elliptic tempered representations in the sense of Arthur \cite{Ar} and they (or rather their images) form an orthogonal basis of $\bigoplus_{G'} \overline R_\un (G')$. This allows us to identify $\bigoplus_{G'} \overline R_\un (G')$ with the {\it subspace} of $\bigoplus_{G'} R_\un(G')$ spanned by the virtual character combinations $\pi(u,s,h)$, which we denote
\begin{equation}
\CR_{\un,\el}\cong \bigoplus_{G'} \overline R_\un (G').
\end{equation}

 Let $\pi(u,s,h)_0$ denote the projection of $\pi(u,s,h)$ onto $\overline R_\un(G)$, let \[\CX(G^\vee)^{0,u}_\el=\bC[\C Y(\Gamma_u)_\el]^{\Gamma_u}_0\] be the corresponding elliptic Langlands parameter subspace and $R_\un(G)_\el$ the subspace of $\CR_{\un,\el}$ spanned by $\{\pi(u,s,h)_0\}$. 

\begin{definition}[cf. \cite{Wa1}] The dual \emph{elliptic nonabelian Fourier transform} is the linear map
\[
\FT^\vee_\el: \CR_{\un,\el}\to \CR_{\un,\el},\quad \FT^\vee_\el(\pi(u,s,h))=\pi(u,h,s), \ (s,h)\in \Gamma_u\backslash \C Y(\Gamma_u)_\el,\ u\in G^\vee \text{ unipotent}.
\]
\end{definition}
When $u$ is a distinguished unipotent element, in which case, $\Gamma_u=A_u$ is a finite group, this definition agrees with the one considered in \cite{CO2}. 

\

We are interested in the interaction between  $\FT^\vee_\el$ and parahoric restrictions. Define the parahoric restriction as a linear map onto unipotent class functions of the reductive quotient of a parahoric subgroup: 
\begin{equation}
\res^K_\un: R_\un(G)\to \bC_\un[K/U_K]^{K/U_K},\quad V\mapsto V^{U_K},\text{ for all } V\in\Irr_\un G.
\end{equation} 
Let $\bC_\un[K/U_K]^{K/U_K}_\el$ denote the elliptic subspace of $\bC_\un[K/U_K]^{K/U_K}$, i.e., the unipotent class functions on $K/U_K$ which are annihilated by all proper Jacquet restriction functors of $K/U_K$. Let 
\[\proj^K_\el: \bC_\un[K/U_K]^{K/U_K}\to \bC_\un[K/U_K]^{K/U_K}_\el
\]
denote the projection map with respect to the ordinary character pairing. Set 
\[\res^\pa_\un=(\res^K_\un)_K,\quad  \proj^\pa_\el=(\proj^K_\el)_K,\]
 where $K$ ranges over the conjugacy classes of {\it maximal} parahoric subgroups of $G$. A similar definition applies to every $G'$ as well.

\medskip

The space $\bC_\un[K/U_K]^{K/U_K}$ has a basis given by the irreducible unipotent characters and another basis given by the almost characters \cite{orange}. The change of basis matrix gives the nonabelian Fourier transform for finite Lie groups, which we denote by
\begin{equation}
\FT^K:\bC_\un[K/U_K]^{K/U_K}\to \bC_\un[K/U_K]^{K/U_K} \text{ and } \FT^\pa=(\FT^K)_{K \text{ maximal}}.
\end{equation}
Explicitly, $\FT^K$ is given as follows. Lusztig partitioned $\Irr_\un K/U_K$ into families $\C F$ and each family has a finite group $\Gamma_{\C F}$ attached such that the elements of $\C F$ are in one-to-one correspondence with irreducible $\Gamma_{\C F}$-equivariant local systems on $\Gamma_{\C F}$, i.e., with 
\[
\Gamma_\CF\backslash\{(x,\rho)\mid \rho\in\widehat Z_{\Gamma_\CF}(x)\},\quad (x,\rho)\mapsto \sigma(x,\rho)\in \CF.
\]
 If we denote by $R_\un(K/U_K)_\CF$ the $\bC$-span of $\CF$, then an equivalent interpretation is that 
\[R_\un(K/U_K)_\CF\cong \bC[\CY(\Gamma_\CF)]^{\Gamma_\CF}.
\]
Here $\CY(\Gamma_\CF)=\{(x,y)\mid x,y\in\Gamma_{\CF},\ xy=yx\}$ and of course $\CY(\Gamma_\CF)=\CY(\Gamma_\CF)_\el$ in our previous notation since $\Gamma_\CF$ is finite. If we define the virtual characters
\begin{equation}
\sigma(\CF, x,y)=\sum_{\rho\in \widehat Z_{\Gamma_\CF}(x)} \overline{\rho(y)}~ \sigma(x,\rho),\quad (x,y)\in\CY(\Gamma_\CF),
\end{equation}
then the involution $\FT^K$ is defined by the requirement that 
\begin{equation}
\FT^K(\sigma(\CF,x,y))=\Delta(x,y)~ \sigma(\CF,y,x), 
\end{equation}
where $\Delta(x,y)=1$ in all cases, except when the family $\CF$ contains the characters $\phi_{512,11}$ in $E_7$, $\phi_{4096,11}$ or $\phi_{4096,26}$ in $E_8$. For these exceptions, the finite group is $\Gamma_{\CF}=C_2=\{1,g_2\}$, and one sets $\Delta(1,g_2)=\Delta(g_2,1)=-1.$

\

We expect the following relation.

\begin{conjecture}[{cf. \cite{MW,Wa1}}]\label{conj} With the notation as above:

\begin{enumerate}
\item The two Fourier transforms commute on the elliptic space $\CR_{\un,\el}$, 
\[\proj^\pa_\el\circ ~\res^\pa_\un\circ \FT^\vee_\el= \proj^\pa_\el\circ \FT^\pa\circ ~\res^\pa.\]
\item If $K_0$ is the maximal hyperspecial parahoric of $G$, then on the elliptic subspace $R_\un(G)_\el$ for $G$,
\[\res^{K_0}_\un\circ \FT^\vee_\el=\FT^{K_0}\circ~ \res^{K_0}_\un.
\]
\end{enumerate}
\end{conjecture}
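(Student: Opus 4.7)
The plan is to verify Conjecture~\ref{conj} for the split adjoint exceptional cases by reducing to a case-by-case check indexed by unipotent classes $u\subset G^\vee$. By the decomposition~(\ref{decomp}), both sides of each identity respect the splitting $\CX(G^\vee)_\el=\bigoplus_u \CX(G^\vee)^u_\el$, so I fix $u$, identify the reductive centraliser $\Gamma_u$ together with a complete set of $\Gamma_u$-orbit representatives $(s,h)\in\C Y(\Gamma_u)_\el$, and check each identity on the resulting finite-dimensional summand.

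For part (2), the key input is a formula for the hyperspecial restriction $\res^{K_0}_\un \pi(su,\phi)$. Via the affine Hecke algebra realisation of unipotent representations underlying \cite{L1,KL}, restriction to the hyperspecial parahoric corresponds to restriction from an affine Hecke algebra to its finite Hecke subalgebra. Combining this with the (generalised) Springer correspondence for $x=su$, one expresses $\res^{K_0}_\un \pi(su,\phi)$ as an explicit combination of unipotent characters of $G^\vee(\bF_q)$ lying in a single Lusztig family $\CF=\CF(u)$ determined by $u$. After twisting by $\overline{\phi(h)}$ and summing over $\phi\in\widehat A_{su}$, one obtains a virtual character of the form $\sigma(\CF(u),\bar s,\bar h)$, where $\bar s,\bar h$ are the images of $s,h$ under a canonical map $\C Y(\Gamma_u)_\el/\Gamma_u\to \CY(\Gamma_{\CF(u)})/\Gamma_{\CF(u)}$.

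Granted such an identification, part (2) reduces to the tautology
\[
\res^{K_0}_\un(\FT^\vee_\el \pi(u,s,h))=\sigma(\CF(u),\bar h,\bar s)=\Delta(\bar s,\bar h)\,\FT^{K_0}\sigma(\CF(u),\bar s,\bar h)\i,
\]
i.e., to the assertion that $\Delta(\bar s,\bar h)=+1$ whenever $(\bar s,\bar h)$ is the image of an elliptic pair in $\Gamma_u$. Part~(1) is then deduced from (2) by a parallel analysis at the remaining maximal parahorics $K$, where the projection $\proj^K_\el$ kills the non-elliptic contributions coming from the smaller reductive quotient; since the non-hyperspecial $K$ only see a subset of the unipotent parameters $(x,\phi)$, the relevant Lusztig families are subfamilies of $\CF(u)$ and the compatibility inherited from the hyperspecial case survives after projection.

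The main obstacle is twofold. First, beyond the distinguished $u$ (where $\Gamma_u=A_u$ is finite and the story reduces to \cite{CO2}), the centraliser $\Gamma_u$ is positive-dimensional, and one must carefully enumerate which commuting semisimple pairs $(s,h)$ satisfy the ellipticity condition $Z_{\Gamma_u}(s,h)$ finite, and match them to pairs in Lusztig's finite family groups $\Gamma_{\CF(u)}$. This requires nontrivial input on the component group structure $A_{su}$ as $s$ varies through elliptic representatives in $\Gamma_u$, together with the KL tables of unipotent representations in exceptional types. Second, one must verify that the sign $\Delta$, which is genuinely nontrivial for the three exceptional families containing $\phi_{512,11}$ in $E_7$ and $\phi_{4096,11},\phi_{4096,26}$ in $E_8$, equals $+1$ on the restricted image of $\C Y(\Gamma_u)_\el$; this sign check is the delicate heart of the computation and cannot be bypassed by any general soft argument.
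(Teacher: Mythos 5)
The plan to fix $u$, decompose $\CX(G^\vee)_\el$ by unipotent class, and carry out a case-by-case verification over $(s,h)\in\Gamma_u\backslash\CY(\Gamma_u)_\el$ is the right framework and matches the paper's organisation. However, the argument you sketch for part (2) rests on two claims that are both false, and the treatment of part (1) is not substantiated.

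First, the claim that $\res^{K_0}_\un\pi(su,\phi)$ (or $\res^{K_0}_\un\pi(u,s,h)_0$) lies in a single Lusztig family $\CF(u)$ of $K_0/U_{K_0}$ is incorrect. The restriction always involves a ``leading'' contribution in $\CF_u$ (the family whose special character corresponds to $u$ under Springer/sign-duality), \emph{plus} contributions from families $\CF$ with $b_{\CF_u}<b_\CF$; this is exactly the shape of the identity~(\ref{e:restrict}) in Theorem~\ref{t:main}. Already in the smallest case $G_2(a_1)$ one has $\res^{K_0}_\un\pi(G_2(a_1),s,h)=\sigma(\phi_{(2,1)},s,h)+\sigma(\phi_{(1,6)},1,1)$, two families. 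The genuine content of the verification is not producing $\sigma(\CF_u,\bar s,\bar h)$ but showing that the coefficients $a^\CF_{(x,y)}(s,h)$ in the lower families satisfy the symmetry $a^{\CF}_{(x,y)}(h,s)=\Delta(x,y)\,a^{\CF}_{(y,x)}(s,h)$ in Theorem~\ref{t:main}(2) — that is where nearly all of the casework lives, and it cannot be finessed by a uniform single-family argument.

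Second, the assertion that $\Delta(\bar s,\bar h)=+1$ on the image of $\CY(\Gamma_u)_\el$ is false, and this is precisely why the root of unity $\zeta(s,h)$ appears in~(\ref{e:restrict}). In the three exceptional families ($\phi_{512,11}$ in $E_7$, $\phi_{4096,11}$ and $\phi_{4096,26}$ in $E_8$) one has $\Delta(1,g_2)=\Delta(g_2,1)=-1$, and these families \emph{do} arise as $\CF_u$ for elliptic unipotent $u$ (e.g.\ $u=A_4+A_1$ in $E_7$, $u=E_6(a_1)+A_1$, $D_7(a_2)$, $A_4+2A_1$ in $E_8$). The paper's explicit tables show leading coefficients $\zeta(s,h)\in\{\pm 1,\pm i\}$ exactly compensating for the nontrivial $\Delta$, via the relation $\zeta(s,h)\,\overline{\zeta(h,s)}=\Delta(\bar s,\bar h)$. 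Your argument would terminate at the statement $\Delta=+1$, which the computations flatly contradict.

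Finally, the claim that part (1) follows from part (2) ``by a parallel analysis at the remaining maximal parahorics'' is not given any real support. The paper does not prove part (1); it proves Theorem~\ref{t:main} and hence Corollary~\ref{conj}(2) for split adjoint exceptional $G$, and explicitly defers the question of other maximal parahorics (and of part (1) in general) to future work. There is no automatic inheritance from $K_0$ to general $K$: after applying $\proj^K_\el$ the relevant family groups are those of the smaller finite reductive quotient $K/U_K$, not subfamilies of $\CF(u)$, and the matching of elliptic pairs on both sides has to be redone. So the parenthetical deduction of (1) is a gap, not a reduction.
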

The point of (2) is that the identity should hold for $K_0$ {\it without} projecting onto the elliptic spaces of the finite parahoric quotients. 

\smallskip

In \cite{MW}, M\oe glin and Waldspurger prove Conjecture \ref{conj}(1) when $G^\vee$ is the symplectic group and therefore $G$ is the split odd orthogonal $p$-adic group (together with its inner form). In \cite{Wa1}, Waldspurger simplifies the arguments and shows that (2) also holds true {\it for all} maximal parahoric subgroups of the odd orthogonal groups, not only $K_0$. The expected compatibility of the two Fourier transforms above is also motivated by the geometric programme of constructing unipotent almost characters for reductive $p$-adic groups as in the work of Lusztig \cite{L4,L5} and Bezrukavnikov-Kazhdan-Varshavsky \cite{BKV}.

\medskip

In the rest of the paper, we verify Conjecture \ref{conj}(2) when $G$ is a split adjoint exceptional group. 
To state a more precise version of our result, let $u$ be a special unipotent class (in the sense of \cite{orange}) and let $\tau(u,\mathbf 1)$ denote the $G(\mathsf F_q)$-type parameterised via the Springer correspondence by the sign-dual of the irreducible Weyl group representation in the top cohomology for the trivial local system supported on $G^\vee\cdot u$. Let $\CF_u$ denote the unipotent $G(\mathsf F_q)$-family whose special element is $\tau(u,\mathbf 1)$. The finite group $\Gamma_{\CF_u}$ associated to $\CF_u$ is Lusztig's canonical quotient $\overline A_u$ of $A_u$. Let $b_\CF$ denote the lowest harmonic degree of the special representation in the family $\CF$. 

The quotient map $\Gamma_u\twoheadrightarrow A_u$ gives rise to a surjection $(s,h)\mapsto (\bar s,\bar h)$,
\begin{equation}
 \CY(\Gamma_u)_\el\longrightarrow \CY(A_u)_\el=\{(\bar s,\bar h)\in A_{u,\el}\times A_{u,\el}\mid \bar s\bar h=\bar h \bar s \}.
\end{equation}

\medskip

We first determine the list of unipotent classes $u$ in a simply-connected group $G^\vee$ of exceptional type which have the property that $\CY(\Gamma_u)_\el\neq \emptyset$. It turns out that all of these unipotent classes are in particular special. The list  is given in Proposition \ref{p:classes}. Then the main result is:

\begin{theorem}\label{t:main}
Suppose $G$ is split adjoint of type $G_2$, $F_4$, $E_6$, $E_7$, or $E_8$. For every unipotent class $u$ with $\CY(\Gamma_u)_\el\neq \emptyset$ and every $(s,h)\in \Gamma_u\backslash \CY(\Gamma_u)_\el$:
\begin{equation}\label{e:restrict}
\res^{K_0}_\un (\pi(u,s,h)_0)=\zeta(s,h)~\sigma(\CF_u,\bar s,\bar h)+\sum_{\substack{\CF\subset \Irr_u G(\mathsf F_q)\\b_{\CF_u}<b_{\CF}}} \ \sum_{(x,y)\in \Gamma_\CF\backslash\CY(\Gamma_\CF)} a_{(x,y)}^{\CF}(s,h) ~\sigma(\CF,x,y),
\end{equation}
for a root of unity $\zeta(s,h)$ and constants $a_{(x,y)}^{\CF}(s,h)\in\overline\bQ$. Moreover: 
\begin{enumerate}
\item  $\zeta(s,h)~\overline{\zeta(h,s)}=\Delta(\bar s,\bar h)$,
\smallskip
\item $a_{(x,y)}^{\CF}(h,s)=\Delta(x,y)~a_{(y,x)}^{\CF}(s,h)$,  for all  $(x,y)\in\Gamma_\CF\backslash\CY(\Gamma_\CF)$ and $(s,h)\in \Gamma_u\backslash\CY(\Gamma_u)_\el$.
\end{enumerate}
\end{theorem}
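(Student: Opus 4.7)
The plan is to verify the theorem by case-by-case analysis. Proposition \ref{p:classes} enumerates the finitely many unipotent classes $u$ in each simply-connected exceptional $G^\vee$ for which $\CY(\Gamma_u)_\el\neq\emptyset$. For each such $u$, I would read off the reductive centraliser $\Gamma_u$ from the standard Bala--Carter--Sommers tables, and list representatives of $\Gamma_u\backslash\CY(\Gamma_u)_\el$. When $u$ is distinguished, $\Gamma_u=A_u$ is finite and the statement reduces to the set-up already handled in \cite{CO2}, so the genuinely new content lies in the case where $\Gamma_u$ has positive dimension. Either way, the set of $\Gamma_u$-orbits of elliptic pairs is small enough to enumerate, so (\ref{e:restrict}) together with (1)--(2) reduces to a finite list of explicit checks.

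For each triple $(u,s,h)$, I would compute $\res^{K_0}_\un(\pi(u,s,h)_0)$ by expanding $\pi(u,s,h)=\sum_{\phi\in\widehat A_{su}}\overline{\phi(h)}\,\pi(su,\phi)$ and using the fact that for the hyperspecial $K_0$, each $\pi(su,\phi)^{U_{K_0}}$ is an explicit unipotent character of $G(\bF_q)$: by Lusztig's parameterisation \cite{L1}, it is attached via the generalised Springer correspondence to a pair $(u',\phi')$ determined by $(s,\phi)$, where $u'$ lies in the closure of the orbit of $u$. Rewriting the sum in the almost-character basis $\sigma(\CF,x,y)$, the leading contribution comes from those $\phi$ whose Springer support is the orbit of $u$ itself; these land in $\CF_u$ and combine, using the definition of $\sigma(\CF_u,\cdot,\cdot)$, into $\zeta(s,h)\,\sigma(\CF_u,\bar s,\bar h)$ for a root of unity $\zeta(s,h)$ coming from Deligne--Lusztig sign conventions and the normalisation of the Springer correspondence relative to the canonical quotient $\bar A_u$. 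The remaining terms have strictly smaller Springer support, which by the monotonicity of Lusztig's $a$-function along the closure order on nilpotent orbits forces them into families with $b_\CF>b_{\CF_u}$.

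To prove (1) and (2), observe that these two identities together are precisely equivalent to $\res^{K_0}_\un\circ\FT^\vee_\el=\FT^{K_0}\circ\res^{K_0}_\un$ on $R_\un(G)_\el$, i.e.\ Conjecture \ref{conj}(2). Comparing the expansion of $\res^{K_0}_\un(\pi(u,h,s)_0)$ with $\FT^{K_0}$ applied to the expansion of $\res^{K_0}_\un(\pi(u,s,h)_0)$, equality of the leading-term coefficients gives (1), while equality of the correction coefficients (using the symmetry $\Delta(x,y)=\Delta(y,x)$) gives (2). The main obstacle is the delicate sign-bookkeeping for the three exceptional families $\phi_{512,11}$ in $E_7$ and $\phi_{4096,11},\phi_{4096,26}$ in $E_8$: for a pair $(u,s,h)$ whose leading term lands in one of these families, the root of unity $\zeta(s,h)\overline{\zeta(h,s)}$ must come out to $-1$, forced by a subtle interaction of the Springer sign, the Deligne--Lusztig sign and the action of $Z$ on $A_{su}$; for all other pairs it must come out to $+1$. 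A secondary difficulty is confirming the $b$-ordering for the correction terms in each case, which requires appealing to Lusztig's results on the fibres of the generalised Springer correspondence, since a priori parahoric restriction could mix in characters from families not predicted by the closure order.
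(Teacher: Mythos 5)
Your overall strategy---enumerate the finitely many classes $u$ with $\CY(\Gamma_u)_\el\neq\emptyset$ via Proposition \ref{p:classes}, list the orbits of elliptic pairs, compute $\res^{K_0}_\un(\pi(u,s,h)_0)$ for each, and verify (1)--(2) by inspection of the resulting expansions---is exactly the paper's approach: the theorem is proved by direct case-by-case computation using Lusztig's parameterisation, Springer-correspondence data, and the available tables. However, a few steps in your plan do not go through as stated.

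First, reading $\Gamma_u$ off ``standard Bala--Carter--Sommers tables'' is not sufficient. Those tables give $\Gamma_u^0$ and $A_u$, but the set $\Gamma_u\backslash\CY(\Gamma_u)_\el$, and hence the virtual characters $\pi(u,s,h)$, depend on the precise extension $1\to\Gamma_u^0\to\Gamma_u\to A_u\to 1$, which need not split. For example, for $u=A_4+A_1$ in $E_7$ and for $A_4+2A_1$, $E_6(a_1)+A_1$, $D_7(a_2)$ in $E_8$, the non-splitting of this sequence changes the centraliser $Z_{\Gamma_u}(\delta)$ from $C_2\times C_2$ to $C_4$, which in turn produces the $\pm i$ coefficients responsible for the sign $\Delta=-1$ in the exceptional families. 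One needs the explicit extension data (the paper cites \cite{Ad}) rather than just $\Gamma_u^0$ and $A_u$.

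Second, your claim that each $\pi(su,\phi)^{U_{K_0}}$ ``is an explicit unipotent character attached via the generalised Springer correspondence to a pair $(u',\phi')$'' overstates what is available: for the hyperspecial $K_0$, the parahoric restriction of $\pi(su,\phi)$ is typically a large reducible sum of unipotent characters, computed from Green functions/Springer-cohomology data, Alvis's induction tables, and in several non-discrete-series cases from new Hecke-algebra calculations (as in section \ref{s:A42A1}, reducing to a $\bH(B_3,2)$ computation). Your appeal to ``monotonicity of Lusztig's $a$-function along the closure order'' to force $b_\CF>b_{\CF_u}$ for the correction terms is not established; the paper does not invoke it, and the $b$-ordering is an observed output of the case-by-case expansions, not a consequence of a general lemma.

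Third, the argument you give for (1) and (2) is circular: ``comparing the expansion of $\res^{K_0}_\un(\pi(u,h,s)_0)$ with $\FT^{K_0}$ applied to the expansion of $\res^{K_0}_\un(\pi(u,s,h)_0)$, equality of the leading-term coefficients gives (1)\,\ldots'' merely restates what must be verified. Similarly, the assertion that $\zeta(s,h)\overline{\zeta(h,s)}=-1$ for the exceptional families is ``forced by a subtle interaction'' of signs is not a proof; in the paper the sign is read off directly from the computed restrictions (e.g.\ the $-i$ and $i$ in the $\pi(A_4A_1,\delta,\pm\delta)$ lines). The correct posture is simply: for each $(u,s,h)$ the expansion is an explicit finite computation, and (1)--(2) hold because the computed coefficients satisfy them.
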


In particular, this says that
\begin{equation}\label{comm}
\res^{K_0}_\un (\pi(u,h,s)_0)=\FT^{K_0}\circ ~\res^{K_0}_\un (\pi(u,s,h)_0),
\end{equation}
for all unipotent elements $u$ such that $(s,h)\in \Gamma_u\backslash\CY(\Gamma_u)_\el\neq \emptyset.$ 
Since the set $\{\pi(u,s,h)_0\}_{(u,s,h)}$ is a basis of the elliptic representation space $R_\un(G)_\el$, the compatibility of the two Fourier transforms on the elliptic unipotent representation space follows at once from (\ref{comm}). 

\begin{corollary}
Conjecture \ref{conj}(2)  holds true when $G$ is a split adjoint exceptional group.
\end{corollary}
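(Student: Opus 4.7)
The plan is to deduce the corollary as a direct linear-algebra consequence of Theorem \ref{t:main}. Both sides of the desired identity $\res^{K_0}_\un \circ \FT^\vee_\el = \FT^{K_0} \circ \res^{K_0}_\un$ are linear endomorphisms of the finite-dimensional space $R_\un(G)_\el$, so it suffices to verify the equality on a spanning set and extend by linearity.

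The spanning set I will use is $\{\pi(u,s,h)_0\}$, with $u$ ranging over unipotent classes in $G^\vee$ and $(s,h)$ over $\Gamma_u\backslash\CY(\Gamma_u)_\el$. These vectors descend from the basis $\pi(u,s,h)$ of $\CR_{\un,\el}$ via the projection onto $\overline R_\un(G)$, and by Proposition \ref{p:classes} only the finite, explicitly enumerated list of classes $u$ with $\CY(\Gamma_u)_\el\neq\emptyset$ contributes nontrivially; the remaining classes make both sides zero on that summand. Next I will verify that $\FT^\vee_\el$ preserves $R_\un(G)_\el$ and acts on this spanning set by $\pi(u,s,h)_0\mapsto\pi(u,h,s)_0$. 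This is immediate from the defining rule $\pi(u,s,h)\mapsto\pi(u,h,s)$, since swapping the two coordinates on $\CY(\Gamma_u)_\el$ commutes with the quotient $\Gamma_u\twoheadrightarrow A_u$, and hence with the projection of $\CX(G^\vee)^u_\el$ onto its adjoint summand $\CX(G^\vee)^{0,u}_\el$ that cuts out $R_\un(G)_\el$.

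With these reductions in place, the identity (\ref{comm}) states precisely that $\res^{K_0}_\un(\FT^\vee_\el(\pi(u,s,h)_0)) = \FT^{K_0}(\res^{K_0}_\un(\pi(u,s,h)_0))$ holds on every element of the spanning set; summing over the basis then yields the corollary. The only remaining point to spell out — and this is the one piece of bookkeeping I would record carefully — is that (\ref{comm}) really does follow from the expansion (\ref{e:restrict}) together with assertions (1) and (2) of Theorem \ref{t:main}: applying $\FT^{K_0}$ to (\ref{e:restrict}) and using its family-by-family definition multiplies the leading term $\sigma(\CF_u,\bar s,\bar h)$ by $\zeta(s,h)\Delta(\bar s,\bar h)$ and permutes each subleading coefficient by $(x,y)\leftrightarrow(y,x)$ with a $\Delta(x,y)$ twist; both effects are matched exactly by $\res^{K_0}_\un(\pi(u,h,s)_0)$ via (1) and (2), respectively. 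There is no conceptual obstacle beyond this verification, because all of the substantive case-by-case work has been done in Theorem \ref{t:main}.
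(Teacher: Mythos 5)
Your overall route is exactly the paper's: Theorem \ref{t:main} applied term by term to the expansion (\ref{e:restrict}) gives the identity (\ref{comm}) for every $\pi(u,s,h)_0$, and since these elements span $R_\un(G)_\el$ (with the classes $u$ limited to the list of Proposition \ref{p:classes}), the conjecture follows by linearity. Your final bookkeeping is also right: applying $\FT^{K_0}$ to (\ref{e:restrict}) multiplies the leading term by $\zeta(s,h)\Delta(\bar s,\bar h)=\zeta(h,s)$ (using that $\zeta$ is a root of unity, so $\overline{\zeta(h,s)}=\zeta(h,s)^{-1}$, and $\Delta=\pm1$), and Theorem \ref{t:main}(2) together with $\Delta(x,y)=\Delta(y,x)$ matches the subleading terms with those of $\res^{K_0}_\un(\pi(u,h,s)_0)$.

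The one step whose justification does not hold as stated is the claim that $\FT^\vee_\el$ preserves $R_\un(G)_\el$ ``because swapping the coordinates commutes with the quotient $\Gamma_u\twoheadrightarrow A_u$.'' The summand cutting out the split form is not governed by $\Gamma_u\twoheadrightarrow A_u$ but by the centre $Z$ of $G^\vee$: $\overline R(A_x^{\ad})\subset\overline R(A_x)$ is spanned by the $\pi(x,\phi)$ with $\phi$ trivial on the image $\bar Z$ of $Z$ in $A_x$, so in the $(s,h)$ picture the projection is the average $\pi(u,s,h)_0=\frac1{|\bar Z|}\sum_{z}\pi(u,s,hz)$ over central translations of the \emph{second} coordinate only. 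The swap does not commute with this inside $\CR_{\un,\el}$: $\FT^\vee_\el(\pi(u,s,h)_0)=\frac1{|\bar Z|}\sum_z\pi(u,hz,s)$, while $\pi(u,h,s)_0=\frac1{|\bar Z|}\sum_z\pi(u,h,sz)$, and these are different virtual characters in general (the first varies the semisimple part $hzu$ of the parameter, the second only the evaluation point). This is vacuous for $G_2$, $F_4$, $E_8$ where $Z=1$, but it is a real issue for $E_6$ and $E_7$. What rescues the deduction --- and what the paper implicitly uses when it keeps only the pairs ``with $z=1$'' in sections \ref{e6} and \ref{a4a1} and remarks that unramified twists are suppressed --- is that replacing the parameter $hu$ by $hzu$, $z\in Z$, twists the representation by an unramified character of the adjoint group, which is trivial on every parahoric; hence both elements above have the same image under $\res^{K_0}_\un$, and equivalently one may simply define the elliptic transform on $R_\un(G)_\el$ by $\pi(u,s,h)_0\mapsto\pi(u,h,s)_0$. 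With that repair (or with the paper's convention), the equality on your spanning set is precisely (\ref{comm}) and the corollary follows as you say; but the commutation argument via $\Gamma_u\twoheadrightarrow A_u$ should be replaced by this unramified-twist observation.
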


We prove Theorem \ref{t:main} by direct calculations, but with this interpretation of $\FT^\vee_\el$ and the available literature, the calculations are transparent and can be done by hand. We rely on Reeder's \cite{Re2} calculations and tables for unipotent discrete series and on the tables in Carter's book \cite{Ca}, Beynon-Spaltenstein's calculation of Springer representations \cite{BS} and Alvis's tables of restrictions for Weyl group representations \cite{Al}. In addition, we need the $W$-structure in some instances of Springer's cohomological representations for classical groups (such as $D_8$), and in our particular cases, it is fast to compute this via the different algorithm of \cite{CKK}. There are also several cases of elliptic, but not discrete series, representations which are not available in the literature and we compute the restrictions in this paper, using Hecke algebra methods.  The description of the reductive part $\Gamma_u$ of the centralisers of unipotent elements has been known in large part for a long time, but for our calculations we need not only the description of the connected component $\Gamma_u^0$ and the component group $A_u$ (which are available in \cite{Ca} for example), but also precise information about the extension $1\to \Gamma_u^0\to\Gamma_u\to A_u\to 1$. Complete descriptions for the exceptional groups are now available in \cite{Ad} and we used them in a handful of interesting cases.  

\smallskip

Particularly interesting instances of the relation in the theorem are provided by the non-distinguished cases, see for example  $u=A_4+A_1$ in $E_7$ (section \ref{s:A4A1}) or $A_4+2A_1$ in $E_8$ (section \ref{s:A42A1}).

\smallskip

It appears likely that Conjecture \ref{conj}(2) holds in fact for all maximal parahoric subgroups in the appropriate formulation.\footnote{In a previous version of this paper, I included a calculation for the parahoric restrictions of type $A_2\times \wti A_2$ in $F_4$ which seemed to indicate that Conjecture \ref{conj}(2) does not hold in that case. The calculation was based on the restriction tables  \cite[p. 65-66]{Re2}, which turn out to be incorrect in the cases of $[F_4(a_3),(31)]$ and $[F_4(a_3),(211)]$. Once the corrections are made, the expected identity of parahoric restrictions can be verified in this example as well.} We will return to this question in future work. 

\

\noindent{\bf Acknowledgements.} This research was supported in part by the EPSRC grant EP/N033922/1.
This work builds on ideas from joint work with Eric Opdam \cite{CO2}.  I thank Jeff Adams for sharing his calculations of the centralisers $\Gamma_u$ and Anne-Marie Aubert and Eric Opdam for very helpful discussions about the nonabelian Fourier transform. I am indebted to Jean-Loup Waldspurger for pointing out the error in a previous version of this paper in the calculation of the parahoric restriction of type $A_2\times \wti A_2$ of the virtual combinations for $F_4$ and for a very valuable correspondence.

\section{Centralisers and component groups}

\subsection{The elliptic spaces $\overline R(A_x)$} Let $G^\vee$ be a complex connected simply-connected semisimple group. Suppose $x\in G^\vee$ has a Jordan decomposition $x=su=us$, where $s$ is semisimple and $u$ is unipotent. The centraliser $Z_{G^\vee}(s)$ is a connected reductive group and $u\in Z_{G^\vee}(s)$. Let $\Gamma_x$ be the reductive part of the centraliser of $u$ in $Z_{G^\vee}(s)$ and let $A_x=\Gamma_x/\Gamma_x^0$ be the group of components. Let $T_x$ be a maximal torus in $\Gamma_x$ and $B_x\supset T_x$ a Borel subgroup of $\Gamma_x$. As in \cite[3.2]{Re1}, there is a natural isomorphism 
\[A_x\cong N(B_x,T_x)/{T_x},
\]
via which $A_x$ acts on $\mathfrak t_x$, the Lie algebra of $T_x$. The elliptic character pairing on $R(A_x)$ is defined as
\begin{equation}
(\psi,\psi')^\el_{A_x}=\frac 1{|A_x|}\sum_{\bar h\in A_x} \overline\psi(\bar h) \psi'(\bar h) {\det}_{\mathfrak t_x}(1-\bar h).
\end{equation}
Element $\bar h\in A_x$ is called elliptic if $ {\det}_{\mathfrak t_x}(1-\bar h)\neq 0$. Let $A_{x,\el}$ denote the set of elliptic elements (a uniom of conjugacy classes). If $L$ is a proper Levi subgroup of $Z_{G^\vee}(s)$ containing $T_x$ and such that $u\in L$, denote by $A_x^L$ the group of components of the centraliser of $u$ in $L$. By \cite[Lemma 3.2.1]{Re1}, the inclusion $L\to Z_{G^\vee}(s)$ induces an inclusion $A_x^L\hookrightarrow A_x$ such that
\begin{equation}\label{A-elliptic}
A_{x,\el}=A_x\setminus\bigcup_{L\in \C L_x} A_x^L,
\end{equation}
where $\C L_x$ is the set of such Levi subgroups of $Z_{G^\vee}(s)$.  As in \cite[Lemma 3.2.2]{Re1}, it follows that $A_{x,\el}\neq\emptyset$ if and only if there exists $h\in \Gamma_x$ such that $hu\notin L$ for any $L\in \C L_x$. One says that $u$ is {\it quasi-distinguished} in $Z_{G^\vee}(s)$ in this case, and this implies in particular that $Z_{G^\vee}(s)$ is a semisimple group, i.e., that $s$ is {isolated}.

Let $\ker(~,~)_{A_x}^\el$ be the radical of the elliptic character pairing and define $\overline R(A_x)$ to the quotient of $R(A_x)$ by it. This can be naturally identified \cite[Proposition 2.2.2]{Re1} with the space $\bC[A_{x,\el}]^{A_x}$ of class functions on $A_x$ supported on $A_{x,\el}$. In fact, the restriction of characters map
\begin{equation}
r: \overline R(A_x)\longrightarrow \bC[A_{x,\el}]^{A_x}
\end{equation}
is an isometry if we endow the target with the pairing
\[(f,f')=\frac 1{|A_x|}\sum_{\bar h\in A_{x,\el}}  \overline {f(\bar h)} f'(\bar h) {\det}_{\mathfrak t_x}(1-\bar h).
\]

Let $\C C(\Gamma_u)_{\sem}$ denote the set of $\Gamma_u$-orbits on $\{s\in G^\vee\mid s\text{ semisimple}\ su=us\}$ and $\C C(A_{x,\el})$ the set of elliptic conjugacy classes in $A_x$.
\begin{lemma}
For every unipotent element $u\in G^\vee$, there is a natural one-to-one correspondence 
\[ \bigsqcup_{s\in \C C(\Gamma_u)_{\sem}} \C C(A_{su,\el})\longleftrightarrow \Gamma_u\backslash \CY(\Gamma_u)_\el.
\]
\end{lemma}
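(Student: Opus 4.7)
The plan is to define the natural forward map and verify it is a bijection on $\Gamma_u$-orbits. Given $(s,h)\in\CY(\Gamma_u)_\el$, I propose to send it to $([s],[\bar h])$, the pair consisting of the $\Gamma_u$-orbit of $s$ and the $A_{su}$-conjugacy class of the image $\bar h\in A_{su}$ of $h$. First I would establish $Z_{\Gamma_u}(s)=\Gamma_{su}$: since $s\in\Gamma_u$ is semisimple and $Z_{G^\vee}(u)=\Gamma_u\ltimes U$ with $U$ unipotent, one has $Z_{G^\vee}(su)=Z_{\Gamma_u}(s)\ltimes Z_U(s)$, which is the Levi decomposition and identifies the reductive part $\Gamma_{su}$ with $Z_{\Gamma_u}(s)$. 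Commutativity of $s,h$ then places $h\in\Gamma_{su}$ and gives $Z_{\Gamma_u}(s,h)=Z_{\Gamma_{su}}(h)$, so the map is well-posed.

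Next I would match the two notions of ellipticity. Replacing $h$ by a $\Gamma_{su}^0$-conjugate if necessary, I may assume $h$ normalizes a chosen maximal torus $T_{su}\subset\Gamma_{su}^0$. A standard fact on centralizers of semisimple elements then identifies the maximal torus of $Z_{\Gamma_{su}^0}(h)^0$ with $(T_{su})^h$, whose Lie algebra is $(\mathfrak t_{su})^{\bar h}$. Hence $Z_{\Gamma_u}(s,h)$ is finite if and only if $\det_{\mathfrak t_{su}}(1-\bar h)\neq 0$, i.e., $\bar h\in A_{su,\el}$. Well-definedness on orbits is routine: conjugation by $g\in\Gamma_u$ preserves $[s]$, and after fixing $s$ the remaining ambiguity $g\in\Gamma_{su}$ translates into $A_{su}$-conjugation of $\bar h$. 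To produce a preimage of $([s],[\bar h])$, I lift $\bar h$ to any element of $\Gamma_{su}$ and pass to its semisimple part via Jordan decomposition; the unipotent part lies in $\Gamma_{su}^0$, so the projection to $A_{su}$ is unchanged.

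The main remaining step, and what I expect to be the heart of the argument, is the uniqueness of a semisimple lift up to $\Gamma_{su}$-conjugacy when $\bar h\in A_{su,\el}$. My plan is a geometric argument: all semisimple lifts of $\bar h$ lie in a single coset $C=\Gamma_{su}^0 h$, an irreducible variety of dimension $\dim\Gamma_{su}^0$. For any semisimple elliptic $h'\in C$, the ellipticity criterion above forces $Z_{\Gamma_{su}^0}(h')$ to be finite, so the $\Gamma_{su}^0$-conjugacy orbit of $h'$ in $C$ has dimension $\dim\Gamma_{su}^0$. Such an orbit is locally closed of full dimension in the irreducible $C$, hence open in $C$. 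Since two disjoint nonempty open subsets of an irreducible variety cannot coexist, the semisimple elliptic orbit in $C$ is unique. This yields the injectivity and completes the bijection.
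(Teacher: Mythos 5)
Your proof follows essentially the same route as the paper's (fix $s$, identify $\Gamma_{su}$ with $Z_{\Gamma_u}(s)$, and match semisimple classes in $Z_{\Gamma_u}(s)$ with their images in $A_{su}$), but the paper presents this as ``immediate from the definitions'' and leaves the injectivity step implicit, whereas you spell it out. Your openness argument for the uniqueness of the elliptic semisimple lift within the coset $\Gamma_{su}^0 h$ (a full-dimensional, locally closed, irreducible orbit is open, and an irreducible variety admits at most one such orbit) is a clean way to make that step rigorous, and is genuinely extra content relative to the paper. The paper also handles the ellipticity translation differently: it invokes Reeder's Levi characterisation $A_{x,\el}=A_x\setminus\bigcup_{L} A_x^L$, while you use the $\det_{\mathfrak t_{su}}(1-\bar h)\neq 0$ definition directly; the two are equivalent, with the Levi route avoiding the Borel--torus bookkeeping below.

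One step in your argument is off as written. Steinberg's identification of the maximal torus of $Z_{\Gamma_{su}^0}(h)^0$ with $((T_{su})^h)^0$ requires $h$ to normalise a Borel--torus pair $(B_{su},T_{su})$, not merely the torus $T_{su}$. For $h\in N(T_{su})\setminus N(B_{su},T_{su})$ the claim fails: take $h$ a Weyl reflection in $\mathsf{SL}_2$, for which $(T^h)^0$ is trivial but $Z_{\mathsf{SL}_2}(h)^0$ is a one-dimensional torus. Relatedly, the $\bar h$-action on $\mathfrak t_{su}$ in the paper is defined through the identification $A_{su}\cong N(B_{su},T_{su})/T_{su}$, so if $h$ only normalises $T_{su}$ its action on $\mathfrak t_{su}$ can differ from that of $\bar h$ by a Weyl element, and $\det_{\mathfrak t_{su}}(1-h)$ need not equal $\det_{\mathfrak t_{su}}(1-\bar h)$. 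The fix is routine: a semisimple element stabilises some Borel--torus pair of $\Gamma_{su}^0$, so you may conjugate $h$ into $N(B_{su},T_{su})$ rather than merely into $N(T_{su})$, after which the standard fact applies verbatim and the rest of your argument goes through.
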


\begin{proof}
This is immediate from the definitions. Fix $s\in \C C(\Gamma_u)_{\sem}$. Every semisimple element $\bar h\in A_{su}$ can be represented by a semisimple element $h\in Z_{\Gamma_u}(s)$, since the unipotent elements of $Z_{\Gamma_u}(s)$ lie in $Z_{\Gamma_u}(s)^0$. Moreover, if $h,h'$ are in the same conjugacy class of semisimple elements in $Z_{\Gamma_u}(s)$, then $\bar h$, $\bar {h'}$ are in the same conjugacy class in $A_{su}$. By (\ref{A-elliptic}), $\bar h$ is elliptic in $A_{su}$ if and only if $h$ is not contained in any proper Levi subgroup $L$ of $Z_{G^\vee}(s)$ such that $u\in L$. This is equivalent with the requirement that there is no nontrivial torus (the connected centre of $L$) in $Z_{G^\vee}(s)$ such that both $u$ and $h$ centralise it. Equivalently, there is no nontrivial torus in $\Gamma_u$ which is centralised by both $s$ and $h$, which by definition means $(s,h)\in \CY(\Gamma_u)_\el$. 
\end{proof}

This means we may identify, as mentioned in the introduction, 
\begin{equation}
\bigoplus_{s\in \C C(\Gamma_u)_{\sem}}\overline R(A_x)\cong \bC[\CY(\Gamma_u)_\el]^{\Gamma_u}.
\end{equation}

\subsection{Centralisers $\Gamma_u$} We list the possibilities for the centralisers $\Gamma_u$, spaces $\CY(\Gamma_u)_\el$, and virtual combinations $\pi(u,s,h)$ that occur for adjoint exceptional groups. An easy observation:

\begin{lemma}\label{l:sc}
If $\Gamma_u$ is a connected, simply-connected semisimple group, then $\CY(\Gamma_u)_\el=\emptyset$.
\end{lemma}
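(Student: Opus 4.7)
The plan is to show that for any commuting pair of semisimple elements $(s,h)$ in a connected, simply-connected, semisimple group $\Gamma_u$ (assumed nontrivial, i.e., of positive rank), the joint centraliser $Z_{\Gamma_u}(s,h)$ must contain a nontrivial torus. By the equivalent characterisation of ellipticity given right after the definition of $\CY(\Gamma_u)_\el$, this forces $(s,h) \notin \CY(\Gamma_u)_\el$.

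First I would invoke Steinberg's theorem: in a connected, simply-connected, semisimple algebraic group, the centraliser of any semisimple element is connected. Thus $M := Z_{\Gamma_u}(s)$ is a connected reductive subgroup of $\Gamma_u$. Moreover, any maximal torus of $\Gamma_u$ containing $s$ (which exists since $s$ is semisimple) lies inside $M$, so $\mathrm{rank}(M) = \mathrm{rank}(\Gamma_u) > 0$.

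Next, since $h$ is semisimple in $\Gamma_u$ and commutes with $s$, we have $h \in M$. The group $M$ being connected reductive, every semisimple element lies in some maximal torus of $M$; choose such a torus $T \ni h$. Then $T \subseteq M = Z_{\Gamma_u}(s)$ means every element of $T$ commutes with $s$, and $h \in T$ means every element of $T$ commutes with $h$. Hence $T \subseteq Z_{\Gamma_u}(s,h)$. Since $T$ is a maximal torus of a positive-rank reductive group, $T$ is a nontrivial torus, contradicting the finiteness of $Z_{\Gamma_u}(s,h)$ required for membership in $\CY(\Gamma_u)_\el$.

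There is no substantive obstacle; the only subtle point is the use of Steinberg's connectedness theorem, which is precisely what the simply-connected hypothesis buys us. Without simple-connectedness, $Z_{\Gamma_u}(s)$ could be disconnected and $h$ might land in a non-identity component, so one could not immediately place $h$ in a maximal torus of the identity component together with $s$; the hypothesis in the lemma avoids exactly this complication.
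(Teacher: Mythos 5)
Your proof is correct and follows essentially the same route as the paper's: both rest on Steinberg's connectedness theorem for $Z_{\Gamma_u}(s)$, then place the semisimple element $h$ in a maximal torus of this connected reductive centraliser, which then sits inside $Z_{\Gamma_u}(s,h)$. Your write-up is just more explicit in spelling out the intermediate steps and in flagging the (implicit) assumption that $\Gamma_u$ is nontrivial.
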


\begin{proof}
Let $s\in \Gamma_u$ be a semisimple element. Then $Z_{\Gamma_u}(s)$ is a nontrivial connected reductive group, since $\Gamma_u$ is connected and simply-connected. If $h$ is semisimple in $Z_{\Gamma_u}(s)$, $h$ is contained in a nontrivial torus of $Z_{\Gamma_u}(s)$, hence $Z_{\Gamma_u}(s)\cap Z_{\Gamma_u}(h)$ is not finite. 
\end{proof}

\begin{example}
If $\Gamma=\mathsf{PGL}(n)$, then $\CY(\Gamma)_\el\neq \emptyset$. If $s$ is the diagonal element $s=(1,\zeta,\zeta^2,\dots,\zeta^n)$, where $\zeta$ is a primitive $n$-th root of $1$, then $Z_\Gamma(s)=T\rtimes C_n$, where $T=(\bC^\times)^n/\bC^\times I_n$ is the maximal diagonal torus and $C_n=\langle w\rangle$, where $w$ is the permutation matrix corresponding to an $n$-cycle in $S_n$. Hence $T^{w}=\langle s\rangle\cong C_n$, and $(s,w)\in \CY(\Gamma)_\el$. This case does not occur for exceptional adjoint $p$-adic groups, except when $n=2,3$. But it appears, for example, for $\mathsf{SL}(n,\bQ_p)$ and the trivial unipotent class. 
\end{example}

\begin{proposition}\label{p:classes}
The lists of unipotent classes $u$ in the simply-connected exceptional groups such that $\CY(\Gamma_u)_\el\neq \emptyset$ are given in Tables \ref{t:G2}, \ref{t:F4}, \ref{t:E6}, \ref{t:E7}, and \ref{t:E8}.
\end{proposition}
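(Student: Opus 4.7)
The proof is a finite case-by-case verification over all unipotent classes in each exceptional simply-connected group. The strategy begins from the necessary condition supplied by Lemma \ref{l:sc}: if $\Gamma_u$ is connected and simply connected, then $\CY(\Gamma_u)_\el=\emptyset$, so such classes may be discarded at once. This immediately eliminates the majority of classes, leaving only those for which either the component group $A_u$ is nontrivial, or the isogeny type of $\Gamma_u^0$ is not simply connected.

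The second step is a refinement of that necessary condition. If $(s,h)$ is elliptic in $\Gamma_u$, then $Z_{\Gamma_u}(s)^0$ must be semisimple, since any central torus in $Z_{\Gamma_u}(s)^0$ would be a nontrivial torus in $\Gamma_u$ centralised by both $s$ and $h$; and, given this, $h$ must be isolated in $Z_{\Gamma_u}(s)$, i.e., its centraliser there is finite. These two conditions, applied in tandem with the structure tables of \cite{Ca} (for $\Gamma_u^0$ and $A_u$) and the explicit descriptions of the extension $1\to \Gamma_u^0\to \Gamma_u\to A_u\to 1$ in \cite{Ad}, reduce the question to a direct inspection of each unipotent class.

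Carrying out the inspection then produces the tables: for distinguished $u$, where $\Gamma_u=A_u$ is finite, the ellipticity condition is vacuous and every such class with nontrivial $A_u$ contributes trivially; for non-distinguished $u$ one takes $s$ to be an isolated semisimple element of $\Gamma_u$ and $h$ an elliptic element of the (then semisimple) quotient $Z_{\Gamma_u}(s)/Z_{\Gamma_u}(s)^0$ acting on $Z_{\Gamma_u}(s)^0$ without fixing a nontrivial torus, using representatives from the standard description of the small-rank groups and the outer automorphisms encoded by $A_u$ (compare the $\mathsf{PGL}(n)$ example). The classes that survive the inspection are precisely those recorded in Tables \ref{t:G2}--\ref{t:E8}, and a posteriori each of them is seen to be special in the sense of \cite{orange}.

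The main obstacle is not conceptual but bookkeeping. In the mixed cases where $\Gamma_u^0$ has positive dimension and $A_u$ is nontrivial, the group $\Gamma_u$ is not determined by $\Gamma_u^0$ and $A_u$ alone; it is the full extension class, together with the action of $A_u$ on $\Gamma_u^0$, that controls which pairs $(s,h)$ can be elliptic. For the exceptional groups this information is recorded by Adams \cite{Ad}, and the verification reduces to a direct check in each instance; one must simply be careful to handle consistently the few cases where a seemingly promising $s$ fails to produce elliptic $h$ because $Z_{\Gamma_u}(s)^0$ retains a central torus.
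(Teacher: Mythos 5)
Your proposal matches the paper's strategy at the top level: both begin by applying Lemma~\ref{l:sc} to discard all classes whose reductive centraliser is connected and simply connected, and both then reduce to a finite inspection driven by the explicit structure of $\Gamma_u$ (including the extension data recorded by Adams \cite{Ad}). The one genuine difference is in how the remaining cases are narrowed down. You derive the necessary conditions directly on the $\Gamma_u$ side — namely that $Z_{\Gamma_u}(s)^0$ must be semisimple, and $h$ must then have finite centraliser there — and proceed to inspect class by class. The paper instead translates the condition onto the $G^\vee$ side via the reorganisation in (\ref{decomp}): $\CY(\Gamma_u)_\el\neq\emptyset$ forces $u$ to be quasi-distinguished inside some maximal pseudo-Levi subgroup $Z_{G^\vee}(s)$, and then invokes the existing \emph{classification} of quasi-distinguished unipotent classes in pseudo-Levi subgroups from \cite{Re2} (see also \cite{CH}). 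This second step is what makes the paper's inspection short: one reads off a short candidate list from the cited tables and then confirms which survive, rather than scanning the full list of unipotent classes that Lemma~\ref{l:sc} fails to eliminate. Both routes arrive at Tables~\ref{t:G2}--\ref{t:E8}, so your argument is correct, but you should be explicit that the heavy lifting after Lemma~\ref{l:sc} is done by a finite, externally tabulated classification rather than a hand-derived criterion alone.

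One small inaccuracy to fix: in the distinguished case, where $\Gamma_u=A_u$ is finite, \emph{every} distinguished class contributes, not only those with nontrivial $A_u$. Indeed when $\Gamma_u=1$ the pair $(1,1)$ still lies in $\CY(\Gamma_u)_\el$, which is consistent with the rows $G_2$, $F_4$, $E_6$, $E_7$, $E_7(a_1)$, $E_7(a_2)$, $E_8$, $E_8(a_1)$, $E_8(a_2)$ appearing in the tables with $A_u=1$. Your phrase ``every such class with nontrivial $A_u$ contributes trivially'' misstates this.
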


\begin{proof}
The proof is by inspection. Using Lemma \ref{l:sc}, we eliminate many cases. In addition, we know that in order for $\CY(\Gamma_u)_\el$ to be non-empty, $u$ must be a quasi-distinguished unipotent class in one of the maximal pseudo-Levi subgroups $Z_{G^\vee}(s)$ of $G^\vee$ and the quasi-distinguished classes are known by \cite{Re2}, also \cite{CH}. The cases that appear in this way are discussed in the rest of the section.
\end{proof}

\begin{table}[h]
\caption{Elliptic pairs for $G_2$\label{t:G2}}
\begin{tabular}{|c|c|c|c|c|c|}
\hline
Unipotent &$\Gamma_u^0$ &$A_u$ &$|\Gamma_u\backslash \CY(\Gamma_u)_\el|$ &$\CF_u$ &$\Gamma_{\CF_u}$\\
\hline
\hline
$G_2$ &$1$ &$1$ &$1$ &$\phi_{(1,6)}$ &$1$\\
\hline
$G_2(a_1)$ &$1$ &$S_3$ &$8$ &$\phi_{(2,1)}$ &$S_3$\\
\hline
\end{tabular}
\end{table}

\begin{table}[h]
\caption{Elliptic pairs for $F_4$\label{t:F4}}
\begin{tabular}{|c|c|c|c|c|c|}
\hline
Unipotent &$\Gamma_u^0$ &$A_u$ &$|\Gamma_u\backslash \CY(\Gamma_u)_\el|$ &$\CF_u$ &$\Gamma_{\CF_u}$\\
\hline
\hline
$F_4$ &$1$ &$1$ &$1$ &$\phi_{1,24}$ &$1$\\
\hline
$F_4(a_1)$ &$1$ &$S_2$ &$4$ &$\phi_{4,13}$ &$C_2$\\
\hline
$F_4(a_2)$ &$1$ &$S_2$ &$4$ &$\phi_{9,10}$ &$1$\\
\hline
$B_3$ &$\mathsf{PGL}(2)$ &$1$ &$1$ &$\phi_{8,9}''$ &$1$\\
\hline
$F_4(a_3)$ &$1$ &$S_4$ &$21$ &$\phi_{12,4}$ &$S_4$\\
\hline
\end{tabular}
\end{table}

\begin{table}[h]
\caption{Elliptic pairs for $E_6$\label{t:E6}}
\begin{tabular}{|c|c|c|c|c|c|}
\hline
Unipotent &$\Gamma_u^0$ &$A_u^{\ad}$ &$|\Gamma_u\backslash \CY(\Gamma_u)_{\el,0}|$ &$\CF_u$ &$\Gamma_{\CF_u}$\\
\hline
\hline
$E_6$ &$1$ &$1$ &$1$ &$\phi_{1,36}$ &$1$\\
\hline
$E_6(a_1)$ &$1$ &$1$ &$1$ &$\phi_{6,25}$ &$1$\\
\hline
$E_6(a_3)$ &$1$ &$S_2$ &$4$ &$\phi_{30,15}$ &$C_2$\\
\hline
$D_4(a_1)$ &$(\bC^\times)^2$ &$S_3$ &$4$ &$\phi_{80,7}$ &$S_3$\\
\hline
\end{tabular}
\end{table}

\begin{table}[h]
\caption{Elliptic pairs for $E_7$\label{t:E7}}
\begin{tabular}{|c|c|c|c|c|c|}
\hline
Unipotent &$\Gamma_u^0$ &$A_u^{\ad}$ &$|\Gamma_u\backslash \CY(\Gamma_u)_{\el,0}|$&$\CF_u$ &$\Gamma_{\CF_u}$\\
\hline
\hline
$E_7$ &$1$ &$1$ &$1$ &$\phi_{1,63}$ &$1$\\
\hline
$E_7(a_1)$ &$1$ &$1$ &$1$ &$\phi_{7,46}$ &$1$\\
\hline
$E_7(a_2)$ &$1$ &$1$ &$1$ &$\phi_{27,37}$ &$1$\\
\hline
$E_7(a_3)$ &$1$ &$S_2$ &$4$ &$\phi_{56,30}$ &$C_2$\\
\hline
$E_7(a_4)$ &$1$ &$S_2$ &$4$ &$\phi_{189,22}$ &$1$\\
\hline
$E_7(a_5)$ &$1$ &$S_3$ &$8$ &$\phi_{315,16}$ &$S_3$\\
\hline
$E_6(a_1)$ &$\mathsf{SO}(2)$ &$S_2$ &$6$ &$\phi_{120,25}$ &$C_2$\\
\hline
$D_5(a_1)+A_1$ &$\mathsf{PGL}(2)$ &$1$ &$1$ &$\phi_{378,14}$ &$1$\\
\hline
$A_3+A_2+A_1$ &$\mathsf{PGL}(2)$ &$1$ &$1$ &$\phi_{210,10}$ &$1$\\
\hline
$A_4+A_1$ &$(\bC^\times)^2$ &$S_2$ &$3$ &$\phi_{512,11}$ &$C_2$\\
\hline
\end{tabular}
\end{table}

\begin{table}[h]
\caption{Elliptic pairs for $E_8$\label{t:E8}}
\begin{tabular}{|c|c|c|c|c|c|}
\hline
Unipotent &$\Gamma_u^0$ &$A_u$ &$|\Gamma_u\backslash \CY(\Gamma_u)_\el|$&$\CF_u$ &$\Gamma_{\CF_u}$\\
\hline
\hline
$E_8$ &$1$ &$1$ &$1$ &$\phi_{1,120}$ &$1$\\
\hline
$E_8(a_1)$ &$1$ &$1$ &$1$ &$\phi_{8,91}$ &$1$\\
\hline
$E_8(a_2)$ &$1$ &$1$ &$1$ &$\phi_{35,74}$ &$1$\\
\hline
$E_8(a_3)$ &$1$ &$S_2$ &$4$ &$\phi_{112,63}$ &$C_2$\\
\hline
$E_8(a_4)$ &$1$ &$S_2$ &$4$ &$\phi_{210,52}$ &$C_2$\\
\hline
$E_8(b_4)$ &$1$ &$S_2$ &$4$ &$\phi_{560,47}$ &$1$\\
\hline
$E_8(a_5)$ &$1$ &$S_2$ &$4$ &$\phi_{700,42}$ &$C_2$\\
\hline
$E_8(b_5)$ &$1$ &$S_3$ &$8$ &$\phi_{1400,37}$ &$S_3$\\
\hline
$E_8(a_6)$ &$1$ &$S_3$ &$8$ &$\phi_{1400,32}$ &$S_3$\\
\hline
$E_8(b_6)$ &$1$ &$S_3$ &$8$ &$\phi_{2240,28}$ &$C_2$\\
\hline
$E_8(a_7)$ &$1$ &$S_5$ &$39$ &$\phi_{4480,16}$ &$S_5$\\
\hline
$D_7(a_1)$ &$\mathsf{SO}(2)$ &$S_2$ &$6$ &$\phi_{3240,31}$ &$1$\\
\hline
$D_5+A_2$ &$\mathsf{SO}(2)$ &$S_2$ &$6$ &$\phi_{4536,23}$ &$1$\\
\hline
$E_6(a_1)+A_1$ &$\bC^\times$ &$S_2$ &$6$ &$\phi_{4096,26}$ &$C_2$\\
\hline
$D_7(a_2)$ &$\bC^\times$ &$S_2$ &$6$ &$\phi_{4200,24}$ &$C_2$\\
\hline
$A_6$ &$\mathsf{SL}(2)^2/\langle(-1,-1)\rangle$ &$1$ &$1$ &$\phi_{4200,21}$ &$1$\\
\hline
$A_4+A_2$ &$\mathsf{SL}(2)^2/\langle(-1,-1)\rangle$ &$1$ &$1$ &$\phi_{4536,13}$ &$1$\\
\hline
$A_4+2A_1$ &$\mathsf{GL}(2)$ &$S_2$ &$3$ &$\phi_{4200,12}$ &$C_2$\\
\hline
$D_4(a_1)+A_2$ &$\mathsf{PGL}(3)$ &$S_2$ &$1$ &$\phi_{2240,10}$ &$C_2$\\
\hline
\end{tabular}
\end{table}

\subsection{Distinguished $u$}
We first have the cases when $u$ is a distinguished orbit in $G^\vee$. For these cases $\Gamma_u=A_u$ and then $\CY(\Gamma_u)_\el=\CY(A_u).$ But in fact, since we will only look at the split adjoint $p$-adic group (rather than all of the forms inner to it), it is sufficient to consider $\CY(A_u^{\ad})$ for these cases. For exceptional groups, $A_u^{\ad}$ is one of $C_2$, $S_3$, $S_4$, $S_5$. For these groups we use the notation of Lusztig \cite{orange} to describe conjugacy classes and representations, see equivalently Carter \cite{Ca}.

If $A_u=C_2$, there are $4$ pairs $(s,h)$, and the corresponding virtual characters are
\begin{equation}
\begin{aligned}
\pi(u,1,1)&=\pi(u,1,\mathbf 1)+\pi(u,1,\epsilon);\\
\pi(u,1,g_2)&=\pi(u,1,\mathbf 1)-\pi(u,1,\epsilon);\\
\pi(u,g_2,1)&=\pi(u,g_2,\mathbf 1)+\pi(u,g_2,\epsilon);\\
\pi(u,g_2,g_2)&=\pi(u,g_2,\mathbf 1)-\pi(u,g_2,\epsilon).
\end{aligned}
\end{equation}

If $A_u=S_3$, there are $8$ pairs $(s,h)$ with corresponding virtual characters:
\begin{equation}
\begin{aligned}
\pi(u,1,1)&= \pi(u,1,\mathbf 1)+2\pi(1,r)+\pi(1,\epsilon);\\
\pi(u,1,g_2)&=\pi(1,\mathbf 1)-\pi(1,\epsilon);\\
\pi(u,1,g_3)&=\pi(1,\mathbf 1)-\pi(1,r)+\pi(1,\epsilon);\\
\pi(u,g_2,1)&=\pi(g_2,\mathbf 1)+ \pi(g_2,\epsilon);\\
\pi(u,g_2,g_2)&=\pi(g_2,\mathbf 1)- \pi(g_2,\epsilon);\\
\pi(u,g_3,1)&=\pi(u,g_3,\mathbf 1)+\pi(u,g_3,\theta)+\pi(u,g_3,\theta^2);\\
\pi(u,g_3,g_3)&=\pi(u,g_3,\mathbf 1)+\theta^2 \pi(u,g_3,\theta)+\theta \pi(u,g_3,\theta^2);\\
\pi(u,g_3,g_3^{-1})&=\pi(u,g_3,\mathbf 1)+\theta \pi(u,g_3,\theta)+\theta^2 \pi(u,g_3,\theta^2).
\end{aligned}
\end{equation}

If $A_u=S_4$, there are $21$ pairs  $(s,h)$ with corresponding virtual characters (below $\tau=g_2\cdot g_2'$ and $\gamma$ is conjugate to $g_2'$ in $S_4$, but not in $Z_{S_4}(g_2')=D_8$):
\begin{equation}
\begin{aligned}
\pi(u,1,1)&=\pi(u,1,\mathbf 1)+ 3 \pi(u,1,\lambda^1)+ 3 \pi(u,1,\lambda^2)+\pi(u,1,\lambda^3)+ 2 \pi(u,1,\sigma);\\
\pi(u,1,g_2)&=\pi(u,1,\mathbf 1)+  \pi(u,1,\lambda^1)- \pi(u,1,\lambda^2)-\pi(u,1,\lambda^3);\\
\pi(u,1,g_2')&=\pi(u,1,\mathbf 1)-  \pi(u,1,\lambda^1)- \pi(u,1,\lambda^2)+\pi(u,1,\lambda^3)+ 2 \pi(u,1,\sigma);\\
\pi(u,1,g_3)&=\pi(u,1,\mathbf 1)+ \pi(u,1,\lambda^3)- \pi(u,1,\sigma);\\
\pi(u,1,g_4)&=\pi(u,1,\mathbf 1)-  \pi(u,1,\lambda^1)+ \pi(u,1,\lambda^2)-\pi(u,1,\lambda^3);\\
\end{aligned}
\end{equation}
\begin{equation}
\begin{aligned}
\pi(u,g_2,1)&=\pi(u,g_2,\mathbf 1)+\pi(u,g_2,\epsilon')+\pi(u,g_2,\epsilon'')+\pi(u,g_2,\epsilon);\\
\pi(u,g_2,g_2)&=\pi(u,g_2,\mathbf 1)-\pi(u,g_2,\epsilon')+\pi(u,g_2,\epsilon'')-\pi(u,g_2,\epsilon);\\
\pi(u,g_2,\tau)&=\pi(u,g_2,\mathbf 1)+\pi(u,g_2,\epsilon')-\pi(u,g_2,\epsilon'')-\pi(u,g_2,\epsilon);\\
\pi(u,g_2,g_2')&=\pi(u,g_2,\mathbf 1)-\pi(u,g_2,\epsilon')-\pi(u,g_2,\epsilon'')+\pi(u,g_2,\epsilon);\\
\pi(u,g_2',1)&=\pi(u,g_2',\mathbf 1)+ 2 \pi(u,g_2',r)+\pi(u,g_2',\epsilon')+\pi(u,g_2',\epsilon'')+\pi(u,g_2',\epsilon);\\
\pi(u,g_2',g_2)&=\pi(u,g_2',\mathbf 1)-\pi(u,g_2',\epsilon')+\pi(u,g_2',\epsilon'')-\pi(u,g_2',\epsilon);\\
\pi(u,g_2',g_2')&=\pi(u,g_2',\mathbf 1)- 2 \pi(u,g_2',r)+\pi(u,g_2',\epsilon')+\pi(u,g_2',\epsilon'')+\pi(u,g_2',\epsilon);\\
\pi(u,g_2',g_4)&=\pi(u,g_2',\mathbf 1)-\pi(u,g_2',\epsilon')-\pi(u,g_2',\epsilon'')+\pi(u,g_2',\epsilon);\\
\pi(u,g_2',\gamma)&=\pi(u,g_2',\mathbf 1)+\pi(u,g_2',\epsilon')-\pi(u,g_2',\epsilon'')-\pi(u,g_2',\epsilon);\\
\end{aligned}
\end{equation}
\begin{equation}
\begin{aligned}
\pi(u,g_3,1)&=\pi(u,g_3,\mathbf 1)+\pi(u,g_3,\theta)+\pi(u,g_3,\theta^2);\\
\pi(u,g_3,g_3)&=\pi(u,g_3,\mathbf 1)+\theta^2 \pi(u,g_3,\theta)+\theta \pi(u,g_3,\theta^2);\\
\pi(u,g_3,g_3^{-1})&=\pi(u,g_3,\mathbf 1)+\theta \pi(u,g_3,\theta)+\theta^2\pi(u,g_3,\theta^2);\\
\pi(u,g_4,1)&=\pi(u,g_4,\mathbf 1)+\pi(u,g_4,i)+\pi(u,g_4,-1)+\pi(u,g_4,-i);\\
\pi(u,g_4,g_4)&=\pi(u,g_4,\mathbf 1)- i\pi(u,g_4,i)-\pi(u,g_4,-1)+i\pi(u,g_4,-i);\\
\pi(u,g_4,g_2')&=\pi(u,g_4,\mathbf 1)-\pi(u,g_4,i)+\pi(u,g_4,-1)-\pi(u,g_4,-i);\\
\pi(u,g_4,g_4^{-1})&=\pi(u,g_4,\mathbf 1)+i\pi(u,g_4,i)-\pi(u,g_4,-1)-i\pi(u,g_4,-i).
\end{aligned}
\end{equation}

\

If $A_u=S_5$, there are $39$ pairs $(s,h)$. We use Lusztig's notation, see \cite{orange} and \cite[p. 454]{Ca}. The representatives are:
\begin{align*}
s=1, \quad &Z_{A_u}(1)=S_5, &h:\ 1,\ g_2,\ g_2',\ g_3,\ g_4,\ g_5,\\
s=g_2,\quad  &Z_{A_u}(g_2)=\langle g_2\rangle\times S_3, &h:\ 1,\ g_2,\ \tau, \ g_3,\ g_2',\ g_6,\\
s=g_2',\quad &Z_{A_u}(g_2')=D_8, &h:\ 1,\ g_2,\ g_2',\ g_4,\ \gamma,\\
s=g_3,\quad &Z_{A_u}(g_3)=\langle g_2\rangle\times\langle g_3\rangle, &h:\ 1,\ g_2,\ g_3,\ g_3^{-1},\ g_6,\ g_6^{-1},\\
s=g_4,\quad &Z_{A_u}(g_4)=\langle g_4\rangle, &h:\ 1, g_4,\ g_2',\ g_4^{-1},\\
s=g_5,\quad &Z_{A_u}(g_5)=\langle g_5\rangle, &h:\ g_5^j,\ 0\le j<5,\\
s=g_6,\quad &Z_{A_u}(g_6)=\langle g_6\rangle, &h:\ g_6^j,\ 0\le j<6.\\
\end{align*}
The notation for the irreducible representations of $Z_{A_u}(s)$ are as in {\it loc. cit.}. We won't list all $39$ virtual character combinations. (The  ones of the form $\pi(u,g_2',h)$ have already been listed for $S_4$) But to illustrate the notation:
\begin{equation}
\begin{aligned}
\pi(u,1,1)&=\pi(u,1,\mathbf 1)+4\pi(u,1,\lambda^1)+5\pi(u,1,\nu)+6\pi(u,1,\lambda^2)+5\pi(u,1,\nu')+4\pi(u,1,\lambda^3)+\pi(u,1,\lambda^4),\\
\pi(u,1,g_2)&=\pi(u,1,\mathbf 1)+2\pi(u,1,\lambda^1)+\pi(u,1,\nu)-\pi(u,1,\nu')-2\pi(u,1,\lambda^3)-\pi(u,1,\lambda^4),\\
\\
\pi(u,g_2,1)&=\pi(u,g_2,\mathbf 1)+2\pi(u,g_2,r)+\pi(u,g_2,\epsilon)+\pi(u,g_2,-\mathbf 1)+2\pi(u,g_2,-r)+\pi(u,g_2,-\epsilon),\\
\pi(u,g_2',1)&=\pi(u,g_2',\mathbf 1)+ 2 \pi(u,g_2',r)+\pi(u,g_2',\epsilon')+\pi(u,g_2',\epsilon'')+\pi(u,g_2',\epsilon),\\
\pi(u,g_3,1)&=\pi(u,g_3,\mathbf 1)+\pi(u,g_3,\theta)+\pi(u,g_3,\theta^2)+\pi(u,g_3,-\mathbf 1)+\pi(u,g_3,-\theta)+\pi(u,g_3,-\theta^2),\\
\pi(u,g_4,1)&=\pi(u,g_4,\mathbf 1)+\pi(u,g_4,i)+\pi(u,g_4,-\mathbf 1)+\pi(u,g_4,-i),\\
\pi(u,g_5,1)&=\pi(u,g_5,\mathbf 1)+\pi(u,g_5,\zeta)+\pi(u,g_5,\zeta^2)+\pi(u,g_5,\zeta^3)+\pi(u,g_5,\zeta^4),\\
\pi(u,g_6,1)&=\pi(u,g_6,\mathbf 1)+\pi(u,g_6,\theta)+\pi(u,g_6,\theta^2)+\pi(u,g_6,-\mathbf 1)+\pi(u,g_6,-\theta)+\pi(u,g_6,-\theta^2).\\
\end{aligned}
\end{equation}

\

The same virtual combinations occur for the families of unipotent representations of finite reductive groups. In order to distinguish between the different families with the same $\Gamma_\CF$, we will use the notation 
\[\sigma(\mu,s,h)
\]
to denote the virtual character parameterised by the pair $(s,h)\in \Gamma_\CF^2$ for the family $\CF$ whose special unipotent representation (i.e., the irreducible unipotent character attached to $(1,\mathbf 1)$) is $\mu$.

\subsection{$\mathbf{\Gamma_u=\mathsf{PGL}(2,\bC)}$}\label{pgl2} This case appears for example when $u=B_3$ in $F_4$. Here 
\begin{equation}
\CY(\Gamma_u)=\text{ the }\Gamma_u\text{-orbit of } (s,h),
\end{equation}
where $s=\left(\begin{matrix} 1 &0\\0 &-1\end{matrix}\right)$ and $w=\left(\begin{matrix} 0 &1\\1 &0\end{matrix}\right)$ (modulo $Z(\mathsf{GL}(2))$). Since the component group of the centraliser in $\Gamma_u$ of $s$ is $C_2$, the only virtual character combination is
\begin{equation}
\pi(u,s,h)=\pi(u,s,\mathbf 1)-\pi(u,s,\epsilon).
\end{equation}

\subsection{$\mathbf{\Gamma_u=\mathsf O(2,\bC)}$} This case appears for example when $u=E_6(a_1)$ in $E_7$. Write \[\Gamma_u^0=\{z\mid |z|=1\}\subset \bC^\times\text{ and }\Gamma_u=\Gamma_u^0\cup \delta \Gamma_u^0,\] where $\delta^2=1$ and $\delta z \delta^{-1}=z^{-1}.$ 
The centre is $Z(\Gamma_u^0)=\{\pm 1\}$.

The centraliser of $\delta$ in $\Gamma_u$ is $\{\pm 1,\pm\delta\}\cong C_2\times C_2$. Denote the irreducible characters by $\{\mathbf 1,\epsilon_1,\epsilon_2,\epsilon\}$, where $\epsilon=\epsilon_1\otimes\epsilon_2$, and $\epsilon_1(-1)=-1$, $\epsilon_1(\delta)=1$, $\epsilon_2(-1)=1$, $\epsilon_2(\delta)=-1$. It is easy to see that the set of $\Gamma_u$-orbits on $\CY(\Gamma_u)_\el$ has representatives
\begin{equation}
\Gamma_u\backslash \CY(\Gamma_u)_\el:\quad (1,\delta),\ (-1,\delta),\ (\delta,1),\ (\delta,-1),\ (\delta,\delta),\ (\delta,-\delta).
\end{equation}
The component group of the centraliser of $1$ (also of $-1$) is $C_2$ and we denote the irreducible characters by $\{\mathbf 1,\epsilon\}$. The virtual character combinations are
\begin{equation}
\begin{aligned}
\pi(u,1,\delta)&=\pi(u,1,\mathbf 1)-\pi(u,1,\epsilon)\\
\pi(u,-1,\delta)&=\pi(u,-1,\mathbf 1)-\pi(u,-1,\epsilon)\\
\pi(u,\delta,1)&=\pi(u,\delta,\mathbf 1)+\pi(u,\delta,\epsilon_1)+\pi(u,\delta,\epsilon_2)+\pi(u,\delta,\epsilon)\\
\pi(u,\delta,-1)&=\pi(u,\delta,\mathbf 1)-\pi(u,\delta,\epsilon_1)+\pi(u,\delta,\epsilon_2)-\pi(u,\delta,\epsilon)\\
\pi(u,\delta,\delta)&=\pi(u,\delta,\mathbf 1)+\pi(u,\delta,\epsilon_1)-\pi(u,\delta,\epsilon_2)-\pi(u,\delta,\epsilon)\\
\pi(u,\delta,-\delta)&=\pi(u,\delta,\mathbf 1)-\pi(u,\delta,\epsilon_1)-\pi(u,\delta,\epsilon_2)+\pi(u,\delta,\epsilon)\\
\end{aligned}
\end{equation}

\subsection{$\mathbf{\Gamma_u=\mathsf O(n,\bC)}$, $\mathbf{n=3,4}$} This case appears when $G^\vee$ is a classical simple group, for example in the symplectic group.

Firstly, consider $\Gamma_u=\mathsf{O}(3)$. There are two conjugacy classes of semisimple elements of order $2$ that we need to consider
\[s_1=\left(\begin{matrix}1&0&0\\0&1&0\\0&0&-1\end{matrix}\right)\text{ and }s_2=\left(\begin{matrix}1&0&0\\0&-1&0\\0&0&-1\end{matrix}\right)
\]
with centralisers in $\Gamma_u$, $\mathsf O(2)\times \mathsf O(1)$ and $\mathsf O(1)\times \mathsf O(2)$, respectively. The component groups of $Z_{\Gamma_u}(s_i)$ are $C_2\times C_2$. Denote by $\delta$ a representative for the nontrivial element of the component group of $\mathsf O(2)$ and $\delta'$ for the nontrivial element of $\mathsf{O}(1)$. Notice that $\delta$ and $\delta'$ are $\Gamma_u$-conjugate to $s_1$, while $\delta\delta'$ is $\Gamma_u$-conjugate to $s_2$. Moreover $Z_{\Gamma_u}(s_i,\delta)=Z_{\Gamma}(s_i,\delta\delta')=C_2^3$, for $i=1,2$. There are 
\begin{equation}
\Gamma_u\backslash \CY(\Gamma_u)_\el: \quad (s_1,\delta),\ (s_1,\delta\delta'),\ (s_2,\delta),\ (s_2,\delta\delta'),
\end{equation}
where the first and the last are self-dual and the middle two pairs are dual to each other. The corresponding virtual character combinations are:
\begin{equation}
\begin{aligned}
\pi(u,s_1,\delta)=\sum_{\epsilon_1,\epsilon_2} \epsilon_1(\delta)\pi(u,s_1,\epsilon_1,\epsilon_2),\qquad\pi(u,s_1,\delta\delta')=\sum_{\epsilon_1,\epsilon_2,\epsilon_3} \epsilon_1(\delta)\epsilon_2(\delta')\pi(u,s_1,\epsilon_1,\epsilon_2),\\
\pi(u,s_2,\delta)=\sum_{\epsilon_1,\epsilon_2} \epsilon_2(\delta)\pi(u,s_2,\epsilon_1,\epsilon_2),\qquad\pi(u,s_2,\delta\delta')=\sum_{\epsilon_1,\epsilon_2} \epsilon_1(\delta')\epsilon_2(\delta)\pi(u,s_2,\epsilon_1,\epsilon_2),\\
\end{aligned}
\end{equation}
where $(\epsilon_1,\epsilon_2)=(\pm,\pm)$ are the characters of the $C_2\times C_2$'s.

\

If $\Gamma_u=\mathsf{O}(4)$, there is only one conjugacy class of semisimple elements of order $2$ which contributes to the elliptic pairs:
\[ s=\left(\begin{matrix} I_2&0\\0&-I_2\end{matrix}\right)
\]
with centraliser $\mathsf{O}(2)\times \mathsf{O}(2)$ and component group $C_2\times C_2$. Let $\Delta(\delta)$ denote the diagonal embedding of the element $\delta\in\mathsf{O}(2)$ from before. Notice that $\Delta(\delta)$ is $\Gamma_u$-conjugate to $s$ and $Z_{\Gamma_u}(s,\Delta(\delta))=(C_2\times C_2)^2$. There is only one (self-dual) elliptic pair $(s,\Delta(\delta))$ with virtual character
\begin{equation}
\pi(u,s,\Delta(\delta))=\pi(u,s,+,+)-\pi(u,s,-,+)-\pi(u,s,+,-)+-\pi(u,s,-,-).
\end{equation}

\subsection{$\mathbf{u=D_4(a_1)}$ in $\mathbf{E_6}$}\label{e6} This is a quasi-distinguished orbit where $\Gamma_u^0$ is a two-dimensional connected torus and $A_u=S_3$. If we realise 
\[
\Gamma_u^0=\{(x,y,z)\in (\bC^\times)^3\mid xyz=1\},
\]
then $\Gamma_u=\Gamma_u^0\ltimes S_3$, where $S_3$ acts on $\Gamma_u^0$ by permuting the coordinates. The centre of $\Gamma_u$ is
\[Z(\Gamma_u)=\langle (\zeta,\zeta,\zeta)\mid \zeta^3=1\rangle\cong C_3,
\]
which is in fact the centre of the simply-connected group $G^\vee=E_6$. Let $1,g_2,g_3$ denote representatives of the conjugacy classes in $A_u$. Then 
\[
Z_{\Gamma_u}(g_2)=\langle g_2, (x,x,x^{-2}) : x\in \bC^\times\rangle\cong \bC^\times \times C_2,\quad Z_{\Gamma_u}(g_3)=\langle Z(\Gamma_u), g_3\rangle \cong C_3\times C_3.
\]
One can verify that there are $12$ $\Gamma_u$-orbits on $\CY(\Gamma_u)_\el$ with representatives:
\begin{equation}
\Gamma_u\backslash \CY(\Gamma_u)_\el:\quad (z,g_3),\ (g_3, z),\ (g_3, z g_3),\ (g_3, z g_3^{-1}), \text{ where } z\in Z(\Gamma_u)=C_3. 
\end{equation}
Since we are only interested in the elliptic tempered representations for the split adjoint $p$-adic $E_6$, only $4$ pairs are relevant for the Langlands parameters, the ones with $z=1$, and the corresponding virtual character combinations are:
\begin{equation}
\begin{aligned}
\pi(u,1,g_3)&=\pi(u,1,\mathbf 1) - \pi(u,1,r)+\pi(u,1,\epsilon),\\
\pi(u,g_3,1)&=\pi(u,g_3,\mathbf 1) + \pi(u,g_3,\theta)+\pi(u,g_3,\theta^2),\\
\pi(u,g_3,g_3)&=\pi(u,g_3,\mathbf 1) +\theta^2 \pi(u,g_3,\theta)+\theta\pi(u,g_3,\theta^2),\\
\pi(u,g_3,g_3^{-1})&=\pi(u,g_3,\mathbf 1) +\theta \pi(u,g_3,\theta)+\theta^2\pi(u,g_3,\theta^2).\\
\end{aligned}
\end{equation}

\subsection{$\mathbf{u=A_4+A_1}$ in $\mathbf{E_7}$}\label{a4a1} This is a quasi-distinguished orbit  where $\Gamma_u^0$ is a two-dimensional torus and $A_u=C_2$. The quasi-distinguished property implies that $A_u$ acts on $\Gamma_u^0=(\bC^\times)^2$ by inversion. Let $\gamma$ denote a lift of the generator of $A_u$ in $\Gamma_u$. Hence 
\[ \gamma (x,y)\gamma^{-1}=(x^{-1},y^{-1}),\quad (x,y)\in \Gamma_u^0.
\]
Clearly $\delta^2\in Z(\Gamma_u)=\{(\pm 1,\pm 1)\}\cong C_2\times C_2$. The sequence $1\to \Gamma_0\to\Gamma\to A_u\to 1$ does not split \cite{Ad}. This fact can be seen also from \cite{Re2}, since the centraliser of $\gamma$ in $G^\vee=E_7$ is the pseudo-Levi of type $A_3+A_3+A_1$ and $A_{\delta u}=C_4$. This also means that if we set $Z(E_7)=\{\pm (1,1)\}\subset  Z(\Gamma_u),$ then we may set, without loss of generality, $\delta^2=(-1,1)$.

Then the centraliser $Z_{\Gamma_u}=Z(E_7)\times \langle\gamma\rangle\cong C_2\times C_4$. In particular, $Z_{\Gamma_u}/Z(E_7)=\langle \delta\rangle=C_4$.

One verifies easily that the orbits of elliptic pairs are:
\begin{equation}
\Gamma_u\backslash \CY(\Gamma_u)_\el:\quad (z,\delta),\ (\delta,z),\ (\delta,z\delta),\quad z\in Z(\Gamma_u)=C_2\times C_2.
\end{equation}

Then the virtual character combinations are:
\begin{equation}
\begin{aligned}
\pi(A_4A_1,z,\delta)&=\pi(A_4A_1,z,\mathbf 1)-\pi(A_4A_1,z,\epsilon),\\
\pi(A_4A_1,\delta,z)&=\sum_{\phi\in \widehat{C_2\times C_4}} \overline{\phi(z)} \pi(A_4A_1,\delta,\phi),\\
\pi(A_4A_1,\delta,z\delta)&=\sum_{\phi\in \widehat{C_2\times C_4}} \overline{\phi(z\delta)} \pi(A_4A_1,\delta,\phi).\\
\end{aligned}
\end{equation}
If we are interested only in the split $p$-adic adjoint form of $E_7$, then it is sufficient to consider:
\begin{equation}
\begin{aligned}
\pi(A_4A_1,1,\delta)&=\pi(A_4A_1,1,\mathbf 1)-\pi(A_4A_1,1,\epsilon),\\
\pi(A_4A_1,-1,\delta)&=\pi(A_4A_1,-1,\mathbf 1)-\pi(A_4A_1,-1,\epsilon),\\
\pi(A_4A_1,\delta,1)&=\pi(A_4A_1,\delta,\mathbf 1)+\pi(A_4A_1,\delta,i)+\pi(A_4A_1,\delta,-\mathbf 1)+\pi(A_4A_1,\delta,-i),\\
\pi(A_4A_1,\delta,\delta)&=\pi(A_4A_1,\delta,\mathbf 1)-i\pi(A_4A_1,\delta,i)-\pi(A_4A_1,\delta,-\mathbf 1)+i\pi(A_4A_1,\delta,-i),\\
\pi(A_4A_1,\delta,-1)&=\pi(A_4A_1,\delta,\mathbf 1)-\pi(A_4A_1,\delta,i)+\pi(A_4A_1,\delta,-\mathbf 1)-\pi(A_4A_1,\delta,-i),\\
\pi(A_4A_1,\delta,-\delta)&=\pi(A_4A_1,\delta,\mathbf 1)+i\pi(A_4A_1,\delta,i)-\pi(A_4A_1,\delta,-\mathbf 1)-i\pi(A_4A_1,\delta,-i).\\
\end{aligned}
\end{equation}
Here $-1=\delta^2$.

\subsection{$\mathbf{u=E_6(a_1)+A_1}$ or $\mathbf{D_7(a_2)}$ in $\mathbf{E_8}$}\label{s:E6(a1)A1} These are quasi-distinguished unipotent classes. The centraliser is 
\begin{equation}
\Gamma_u=\langle z,\delta\mid z\in \bC^\times,\ \delta z\delta^{-1}=z^{-1},\ \delta^2=-1\rangle,
\end{equation}
with $\Gamma_u=\bC^\times$ and $A_u=C_2$. The centre of $\Gamma_u$ is $Z(\Gamma_u)=\{\pm 1\}\cong C_2$. The centraliser of $\delta$ is $Z_{\Gamma_u}(\delta)=\{1,\delta,-1,-\delta\}\cong C_4$. There are $6$ virtual character combinations:
\begin{equation}
\begin{aligned}
\pi(u,1,\delta)&=\pi(u,1,\mathbf 1)-\pi(u,1,\epsilon),\\
\pi(u,-1,\delta)&=\pi(u,-1,\mathbf 1)-\pi(u,-1,\epsilon),\\
\pi(u,\delta,1)&=\pi(u,\delta,\mathbf 1)+\pi(u,\delta,i)+\pi(u,\delta,-\mathbf 1)+\pi(u,\delta,-i),\\
\pi(u,\delta,-1)&=\pi(u,\delta,\mathbf 1)-\pi(u,\delta,i)+\pi(u,\delta,-\mathbf 1)-\pi(u,\delta,-i),\\
\pi(u,\delta,1)&=\pi(u,\delta,\mathbf 1)-i\pi(u,\delta,i)-\pi(u,\delta,-\mathbf 1)+i\pi(u,\delta,-i),\\
\pi(u,\delta,1)&=\pi(u,\delta,\mathbf 1)+i\pi(u,\delta,i)-\pi(u,\delta,-\mathbf 1)-i\pi(u,\delta,-i).
\end{aligned}
\end{equation}

\subsection{$\mathbf{u=A_4+2A_1}$ in $\mathbf{E_8}$}\label{s:A_4+2A_1}\label{s:A4+2A1} This is not a quasi-distinguished unipotent class. The component group is $A_u=C_2=\langle\bar\delta\rangle.$ The reductive centraliser fits in the non-split short exact sequence
\begin{equation}
1\longrightarrow \mathsf{GL}(2)\longrightarrow \Gamma_u\longrightarrow A_u\longrightarrow 1,
\end{equation}
where $\delta^2=-1$ (so $\delta^4=1$) and $\delta x \delta^{-1}=(x^t)^{-1}$ for all $x\in \mathsf{GL}(2)$. The centre is $Z(\Gamma_u)=\{\pm 1\}$. The semisimple conjugacy classes in $\Gamma_u$ have representatives 
\[\left(\begin{matrix}a &0\\0&b\end{matrix}\right),\quad \delta,\quad \beta=\left(\begin{matrix}1 &1\\0&1\end{matrix}\right)\delta.
\]
Since we are interested in isolated semisimple elements, in particular torsion elements, we only need to consider 
\[s=\left(\begin{matrix}1 &0\\0&-1\end{matrix}\right)\text{ and }\delta, \text{ where }Z_{\Gamma_u}(s,\delta)=\langle s\rangle \times \langle \delta\rangle\cong C_2\times C_4.
\]
This is because there is no element in $\Gamma_u$ with finite centraliser, hence $\pm 1$ can't contribute to the elliptic pairs, and moreover, $\beta$ has order $8$, so it can't be isolated (there are no isolated elements of order $8$ in $E_8$). 

For $s$, we compute $Z_{\Gamma_u}(s)$ and find
\[
Z_{\Gamma_u}(s)=\langle x,\delta\mid x\in (\bC^\times)^2,\ \delta x\delta^{-1}=x^{-1}\rangle \text{ with }A_{\Gamma_u}(s)=\langle \bar\delta\rangle\cong C_2.
\]
Up to conjugacy, this gives one elliptic pair $(s,\delta)$, and
\[\pi(u,s,\delta)=\pi(u,s,\mathbf 1)-\pi(u,s,\epsilon).
\]

For $\delta$, we compute $Z_{\Gamma_u}(\delta)$ and find
\[ Z_{\Gamma_u}(\delta)=\langle x,\delta\mid x\in \mathsf{O}(2),\ \delta x\delta^{-1}=x,\ \delta^2=-1\rangle \text{ with }\CS_{\Gamma_u}(\delta)=\{1,s,\bar\delta,s\bar\delta\}\cong C_2\times C_2.
\]
Up to conjugacy, there are two elliptic pairs $(\delta,s)$ and $(\delta,s\delta)$ giving:
\begin{align*}
\pi(u,\delta,s)&=\pi(u,\delta,\mathbf 1)-\pi(u,\delta,\epsilon_1)+\pi(u,\delta,\epsilon_2)-\pi(u,\delta,\epsilon),\\
\pi(u,\delta,s\delta)&=\pi(u,\delta,\mathbf 1)-\pi(u,\delta,\epsilon_1)-\pi(u,\delta,\epsilon_2)+\pi(u,\delta,\epsilon),
\end{align*}
where $\epsilon=\epsilon_1\epsilon_2$, $\epsilon_1(s)=-1$, $\epsilon_2(\bar\delta)=1$, and $\epsilon_2(s)=1$, $\epsilon_2(\bar\delta)=-1$.

\subsection{$\mathbf{A_6}$ and $\mathbf{A_4+A_2}$ in $\mathbf{E_8}$}\label{s:A6} These are not quasi-distinguished classes. The centraliser is connected, see \cite{Ad}:
\[\Gamma_u=\mathsf{SL}(2)\times\mathsf{SL}(2)/\langle (-1,-1)\rangle.
\]
Denote by $\Delta(x)$, $x\in\mathsf{SL}(2)$, the image of the diagonal embedding of $x$. There is only one elliptic pair $(s,h)\sim (h,s)$, where 
\begin{equation}
s=\Delta\left(\begin{matrix}i &0\\0 &-i\end{matrix}\right),\quad h=\Delta\left(\begin{matrix}0 &1\\-1 &0\end{matrix}\right).
\end{equation}
The common centraliser is $Z_{\Gamma_u}(s,h)=\langle s\rangle\times\langle h\rangle \cong C_2\times C_2$. The relevant component group is $A_{\Gamma_u}(s)=\langle h\rangle\cong C_2$, and therefore the only virtual character combination is
\[\pi(u,s,h)=\pi(u,s,\mathbf 1)-\pi(u,s,\epsilon).
\] 

\subsection{$\mathbf{D_4(a_1)+A_2}$ in $\mathbf{E_8}$}\label{s:D4A2} This is not quasi-distinguished. The centraliser is 
\[\Gamma_u=\mathsf{PSL}(3)\rtimes \bZ/2,
\]
with component group $A_u=\langle\delta\rangle\cong C_2$, $\delta^2=1$, see \cite{Ad}. The action is $\delta x \delta^{-1}=(x^t)^{-1}$ for all $x\in \mathsf{PSL}(3)$. The centre is $Z(\Gamma_u)=\{1\}$. Denote 
\[
Z=Z(\mathsf{SL}(3)\cong C_3,
\]
and fix $\theta$ a primitive $3$-root of $1$. The centraliser of $\delta$ is 
\[Z_{\Gamma_u}(\delta)\cong \mathsf{SO}(3)\times \langle\delta\rangle
\]
and there is no semisimple element $h\in Z_{\Gamma_u}(\delta)$ such that $(\delta,h)$ is an elliptic pair. (Indeed, the centraliser of $\delta$ in $E_8$ is not a maximal quasi-Levi subgroup, i.e., $\delta$ is not isolated.) 

On the other hand, we know that the only maximal quasi-Levi subgroup in which $u$ is quasi-distinguished is $E_6+A_2$, which is the centraliser in $E_8$ of an element of order $3$. The elements of order $3$ in $\Gamma_u$ are all conjugate to
\[s=\left(\begin{matrix} 1&0&0\\0&\theta&0\\0&0&\theta^2\end{matrix}\right)\text{ mod } Z.
\]
We compute 
\begin{equation}
Z_{\Gamma_u}(s)=T_{su}\rtimes S_3,\text{ where } T_{su}=\{(x,y,z)\mid xyz=1\}/Z\cong (\bC^\times),
\end{equation}
$S_3$ acts by permutations and it is generated by 
\[
g_2=\left(\begin{matrix}0&1&0\\-1&0&0\\0&0&1\end{matrix}\right)\delta\quad\text{ and } g_3=\left(\begin{matrix}0&0&1\\1&0&0\\0&1&0\end{matrix}\right).
\]
There is only one $Z_{\Gamma_u}(s)$-conjugacy class in $Z_{\Gamma_u}(s)$, that of $g_3$, such that $Z_{\Gamma_u}(s,g_3)$ is finite. In fact, $Z_{\Gamma_u}(s,g_3)=\langle g_3\rangle$. (This is the same calculation as for $D_4(a_1)$ in $E_6$.) Hence, there is only one elliptic pair $(s,g_3)$, and notice that $s$ and $g_3$ are $\Gamma_u$-conjugate, hence this pair is self-dual. The corresponding virtual character combination is
\begin{equation}
\pi(u,s,g_3)=\pi(u,s,\mathbf 1)-\pi(u,s,r)+\pi(u,s,\epsilon).
\end{equation}

\section{$\mathbf{G_2}$}

There are two nilpotent orbits that occur. For the principal unipotent class:
\begin{equation}
\res^K_\un(\pi(G_2,1,1))=\sigma(\sgn,1,1)
\end{equation}
for all (maximal) parahoric subgroups $K$. 

\subsection{$\mathbf{G_2(a_1})$} The subregular  unipotent class is distinguished with component group $S_3$. We use the table for parahoric restriction from \cite{CO2}.

The restrictions to $K_0=G_2(q)$ are
\begin{equation}
\pi(G_2(a_1)), (s,h))=\sigma(\phi_{(2,1)},s,h)+\sigma(\phi_{(1,6)},1,1),
\end{equation}
for all $(s,h)\in \CY(S_3)$.

When the parahoric is of type $A_2$ or $A_1+\wti A_1$, the families in the finite reductive groups are all singletons and self-dual under the finite Fourier transform, so to simplify notation, we give the $K$-types, rather than the notation $\sigma(\CF,1,1)$.

For $K=A_2$, the restrictions are:
\begin{equation}
\begin{aligned}
&\pi(G_2(a_1),(1,1))=3 \epsilon+r, &\pi(G_2(a_1),(1,g_2))=\epsilon+r, \\
&\pi(G_2(a_1),(1,g_3))=r, &\pi(G_2(a_1),(g_2,1))=\epsilon+r, \\
&\pi(G_2(a_1),(g_3,1))=\pi(G_2(a_1),(g_3,g_3))=\pi(G_2(a_1),(g_3,g_3^{-1}))=r.
\end{aligned}
\end{equation} 

For $K=A_1+\wti A_1$, the restrictions are:
\begin{equation}
\begin{aligned}
&\pi(G_2(a_1),(1,1))=\epsilon\otimes\epsilon+3\epsilon\otimes 1 +1\otimes\epsilon,\\
&\pi(G_2(a_1),(1,g_2))=\pi(G_2(a_1),(g_2,1))=\epsilon\otimes\epsilon+\epsilon\otimes 1 +1\otimes\epsilon,\\
&\pi(G_2(a_1),(1,g_3))=\pi(G_2(a_1),(g_3,1))=\pi(G_2(a_1),(g_3,g_3))=\pi(G_2(a_1),(g_3,g_3^{-1}))=\epsilon\otimes\epsilon+1\otimes\epsilon.
\end{aligned}
\end{equation}

\section{$\mathbf {F_4}$} 

When we give the restrictions below, if a family of the finite group is a singleton, we give the $K$-type (rather than the $\sigma$ notation). To compute the families in the parahoric restrictions of discrete series representations, we refer to the tables in \cite{Re2}. For other representations we use \cite{BS} and \cite{Al}.

\

The $K_0$-restriction for the principal unipotent class is as before, $\pi(F_4,(1,1))=\phi_{1,24}$. 

\subsection{$\mathbf{F_4(a_1),F_4(a_2)}$} These are distinguished classes with component group $C_2$. 

The restrictions to $K_0=F_4(q)$ are

\begin{equation}
\pi(F_4(a_1),(s,h))=\sigma(\phi_{4,13},(s,h))+\phi_{1,24},
\end{equation}
for all $(s,h)\in \CY(C_2)$.

\

\begin{equation}
\pi(F_4(a_2),(s,h))=\phi_{9,10}+\sigma(\phi_{4,13},(s,h))+\phi_{1,24},
\end{equation}
for all $(s,h)\in \CY(C_2)$.

\subsection{$\mathbf{B_3}$} This is not a quasi-distinguished class. The centraliser is $\Gamma_u=\mathsf{PGL}(2,\bC)$, see section \ref{pgl2}. The centraliser of $s$ in $G^\vee=F_4(\bC)$ is a pseudo-Levi subgroup of type $B_4$ and the unipotent element $u$ lives in the subregular orbit $(711)$ of $B_4$ with component group $C_2$. To compute the $K$-structure of the corresponding irreducible tempered $G$-representations with Iwahori fixed vectors, we use Springer representations and induction, see for example \cite[Proposition (0.7)]{Re2}. We find that the restrictions to $K_0$ are
\begin{equation}
\begin{aligned}
\pi(B_3,s,\mathbf 1)&=\phi_{8,9}''+\phi_{4,13}+\phi_{2,16}''+\phi_{1,24},\\
\pi(B_3,s,\epsilon)&=\phi_{9,10},\\
\end{aligned}
\end{equation}
and therefore the virtual character combination is
\begin{equation}
\pi(B_3,s,h)=\phi_{8,9}'' + \sigma(\phi_{4,13},1,1)-\phi_{9,10}+\phi_{1,24},
\end{equation}
which is self-dual with respect to the finite Fourier transform.

\subsection{$\mathbf{F_4(a_3)}$} This is a distinguished class with component group $S_4$. Notice that in \cite[p. 64]{Re2}, there is a typo in the restriction for $[B_4(531),\epsilon'']$, the second $K_0$-type should be $\phi'_{(2,16)}$, rather than $\phi''_{(2,16)}.$ There is also a typo in \cite[p. 479]{Ca}, in the second $C_2$-family for $F_4(q)$, $\phi_{2,16}'$ and $\phi_{2,16}''$ should be swapped (compare to \cite{orange}).

\medskip

The restrictions to $K_0=F_4(q)$ are as follows:
\begin{equation}
\begin{aligned}
\pi(F_4(a_3),(1,1))&=\sigma(\phi_{12,4}, 1,1)+3 \sigma(\phi_{4,13},1,1)+ 4\phi'_{8,9}+\phi''_{8,9}+\phi_{9,10}+\phi_{1,24},\\
\pi(F_4(a_3),(1,g_2))&=\sigma(\phi_{12,4}, 1,g_2)+3 \sigma(\phi_{4,13},1,1)+ 2\phi'_{8,9}+\phi''_{8,9}+\phi_{9,10}+\phi_{1,24},\\
\pi(F_4(a_3),(g_2,1))&=\sigma(\phi_{12,4}, g_2,1)+3 \sigma(\phi_{4,13},1,1)+ 2\phi'_{8,9}+\phi''_{8,9}+\phi_{9,10}+\phi_{1,24},\\
\pi(F_4(a_3),(1,g_2'))&=\sigma(\phi_{12,4},1,g_2')+ \sigma(\phi_{4,13},1,1)+2\sigma(\phi_{4,13},1,g_2)+\phi''_{8,9}+\phi_{9,10}+\phi_{1,24},\\
\pi(F_4(a_3),(g_2',1))&=\sigma(\phi_{12,4},g_2',1)+ \sigma(\phi_{4,13},1,1)+2\sigma(\phi_{4,13},g_2,1)+\phi''_{8,9}+\phi_{9,10}+\phi_{1,24},\\
\pi(F_4(a_3),1,g_3)&=\sigma(\phi_{12,4},1,g_3)+\phi'_{8,9}+\phi''_{8,9}+\phi_{9,10}+\phi_{1,24},\\
\pi(F_4(a_3),g_3,1)&=\sigma(\phi_{12,4},g_3,1)+\phi'_{8,9}+\phi''_{8,9}+\phi_{9,10}+\phi_{1,24},\\
\pi(F_4(a_3),1,g_4)&=\sigma(\phi_{12,4},1,g_4)+\sigma(\phi_{4,13},1,g_2)+\phi''_{8,9}+\phi_{9,10}+\phi_{1,24},\\
\pi(F_4(a_3),g_4,1)&=\sigma(\phi_{12,4},g_4,1)+\sigma(\phi_{4,13},g_2,1)+\phi''_{8,9}+\phi_{9,10}+\phi_{1,24},\\
\end{aligned}
\end{equation}
\begin{equation}
\begin{aligned}
\pi(F_4(a_3),g_2,g_2)&=\sigma(\phi_{12,4},g_2,g_2)+\sigma(\phi_{4,13},1,1)+2\phi'_{8,9}+\phi''_{8,9}+\phi_{9,10}+\phi_{1,24},\\
\pi(F_4(a_3),(g_2,g_2'))&= \sigma(\phi_{12,4},g_2,g_2')+ (\sigma(\phi_{4,13},1,\mathbf 1)+\sigma(\phi_{4,13},g_2,\mathbf 1)) +\phi''_{8,9}+\phi_{9,10}+\phi_{1,24},\\
\pi(F_4(a_3),(g_2',g_2))&= \sigma(\phi_{12,4},g_2',g_2)+ (\sigma(\phi_{4,13},1,\mathbf 1)+\sigma(\phi_{4,13},g_2,\mathbf 1)) +\phi''_{8,9}+\phi_{9,10}+\phi_{1,24},\\
\pi(F_4(a_3),g_2,\tau)&=\sigma(\phi_{12,4},g_2,\tau)+\sigma(\phi_{4,13},1,1)+\phi''_{8,9}+\phi_{9,10}+\phi_{1,24},\\
\pi(F_4(a_3),g_2',g_2')&=\sigma(\phi_{12,4},g_2',g_2')+\sigma(\phi_{4,13},1,1)+2\sigma(\phi_{4,13},g_2,g_2)+\phi''_{8,9}+\phi_{9,10}+\phi_{1,24},\\
\pi(F_4(a_3),g_2',g_4)&=\sigma(\phi_{12,4},g_2',g_4)+\sigma(\phi_{4,13},1,g_2)+\phi''_{8,9}+\phi_{9,10}+\phi_{1,24},\\
\pi(F_4(a_3),g_4,g_2')&=\sigma(\phi_{12,4},g_4,g_2')+\sigma(\phi_{4,13},g_2,1)+\phi''_{8,9}+\phi_{9,10}+\phi_{1,24},\\
\pi(F_4(a_3),g_2',\gamma)&=\sigma(\phi_{12,4},g_2',\gamma)+\sigma(\phi_{4,13},1,g_2)+\sigma(\phi_{4,13},g_2,1)+\sigma(\phi_{4,13},g_2,g_2)\\&+\phi''_{8,9}+\phi_{9,10}+\phi_{1,24},\\
\end{aligned}
\end{equation}
\begin{equation}
\begin{aligned}
\pi(F_4(a_3),g_3,g_3)&=\sigma(\phi_{12,4},g_3,g_3)+\phi'_{8,9}+\phi''_{8,9}+\phi_{9,10}+\phi_{1,24},\\
\pi(F_4(a_3),g_3,g_3^{-1})&=\sigma(\phi_{12,4},g_3,g_3^{-1})+\phi'_{8,9}+\phi''_{8,9}+\phi_{9,10}+\phi_{1,24},\\
\pi(F_4(a_3),g_4,g_4)&=\sigma(\phi_{12,4},g_4,g_4)+\sigma(\phi_{4,13},g_2,g_2)+\phi''_{8,9}+\phi_{9,10}+\phi_{1,24},\\
\pi(F_4(a_3),g_4,g_4^{-1})&=\sigma(\phi_{12,4},g_4,g_4^{-1})+\sigma(\phi_{4,13},g_2,g_2)+\phi''_{8,9}+\phi_{9,10}+\phi_{1,24},\\
\end{aligned}
\end{equation}

\medskip

If a family $\CF$ is attached to the group $\Gamma_{\CF}=C_2$, then 
\begin{equation}\label{e:C2-stable}
\sigma(\CF,\mathbf 1)+\sigma(\CF,g_2,\mathbf 1)=\frac 12(\sigma(\CF,1,1)+\sigma(\CF,1,g_2)+\sigma(\CF,g_2,1)+\sigma(\CF,g_2,g_2)),
\end{equation}
and therefore, this expression is self-dual with respect to the finite $\FT$. This appears above in $\pi(F_4(a_3),(g_2,g_2'))$ for example.

\section{$\mathbf{E_6}$}

The $K_0$-restriction for principal class is as before, $\pi(E_6,(1,1))=\phi_{1,36}$. For the distinguished unipotent classes, we only consider $A_u^{\ad}$ since we are interested in the restrictions of the split $p$-adic group.

\subsection{$\mathbf{E_6(a_1)}$} This is a distinguished class with $A_u=1$. The only character restricts to $E_6(q)$ as:
\begin{equation}
\pi(E_6(a_1),1,1)=\phi_{6,25}+\phi_{1,36}
\end{equation}
(both representations in the right hand side form one-element families).

\subsection{$\mathbf{E_6(a_3)}$} This is a distinguished class with $A_u=C_2$. The restrictions to $E_6(q)$ are:

\begin{equation}
\begin{aligned}
\pi(E_6(a_3),1,1)&=\sigma(\phi_{30,15},1,1)+\phi_{20,20}+2\phi_{6,25}+\phi_{1,36},\\
\pi(E_6(a_3),1,g_2)&=\sigma(\phi_{30,15},1,g_2)+\phi_{20,20}+\phi_{1,36},\\
\pi(E_6(a_3),g_2,1)&=\sigma(\phi_{30,15},1,1)+\phi_{20,20}+\phi_{1,36},\\
\pi(E_6(a_3),g_2,g_2)&=\sigma(\phi_{30,15},g_2,g_2)+\phi_{20,20}+\phi_{1,36}.\\
\end{aligned}
\end{equation}

\subsection{$\mathbf{D_4(a_1)}$} This is the quasi-distinguished class with component group $S_3$, discussed in section \ref{e6}. We note that the centraliser of $g_3$ in $E_6$ is the pseudo-Levi subgroup of type $3A_2$. 

The $K_0$-restrictions of the virtual combinations are:
\begin{equation}
\begin{aligned}
\pi(D_4(a_1),1,g_3)&=\sigma(\phi_{80,7},1,g_3)  + \gamma(D_4(a_1)) ,\\
\pi(D_4(a_1),g_3,1)&=\sigma(\phi_{80,7},g_3,1)+  \gamma(D_4(a_1)),\\
\pi(D_4(a_1),g_3,g_3)&=\sigma(\phi_{80,7},g_3,g_3)+  \gamma(D_4(a_1)),\\
\pi(D_4(a_1),g_3,g_3^{-1})&=\sigma(\phi_{80,7},g_3,g_3^{-1}) +  \gamma(D_4(a_1)),\\
\end{aligned}
\end{equation}
where $\gamma(D_4(a_1))=(\sigma(\phi_{30,15},1,\mathbf 1)+\sigma(\phi_{30,15},g_2,\mathbf 1))+2\phi_{60,11}+\phi_{24,12}+2\phi_{20,20}+\phi_{1,36}$ is $\FT$-self-dual. 
Recall the observation about the self-duality of (\ref{e:C2-stable}).

\section{$\mathbf{E_7}$} The group of unramified characters of the split $p$-adic $E_7$ is $\bZ/2$, but since we are only interested in parahoric restrictions, we do not introduce the unramified twists in the notation below.
One can consult \cite{Re2}, where the precise information about which representations are isomorphic to their twist. For the same reason, the component groups $A_u$ below are given mod the centre $\bZ/2$. 

\subsection{$\mathbf{E_7(a_1),E_7(a_2)}$} These are distinguished classes with $A_u=1$. The restrictions to $K_0=E_7(q)$ are $\FT$-self-dual:

\begin{equation}
\pi(E_7(a_1), 1,1)=\phi_{7,46}+\phi_{1,63}.
\end{equation}
\begin{equation}
\pi(E_7(a_2), 1,1)=\phi_{27,37}+\phi_{7,46}+\phi_{1,63}.
\end{equation}

\subsection{$\mathbf{E_7(a_3)}$} This is a distinguished class with $A_u=S_2$. The $K_0$-restrictions  are:
\begin{equation}
\begin{aligned}
\pi(E_7(a_3),1,1)&=\sigma(\phi_{56,30},1,1)+\phi_{27,37}+2\phi_{7,46}+\phi_{1,63},\\
\pi(E_7(a_3),1,g_2)&=\sigma(\phi_{56,30},1,g_2)+\phi_{27,37}+\phi_{1,63},\\
\pi(E_7(a_3),g_2,1)&=\sigma(\phi_{56,30},g_2,1)+\phi_{27,37}+\phi_{1,63},\\
\pi(E_7(a_3),g_2,g_2)&=\sigma(\phi_{56,30},g_2,g_2)+\phi_{27,37}+\phi_{1,63}.\\
\end{aligned}
\end{equation}

\subsection{$\mathbf{E_7(a_4)}$} This is a distinguished orbit with $A_u=S_2$. The restrictions to $K=E_7(q)$ are:
\begin{equation}
\begin{aligned}
\pi(E_7(a_4),1,1)&=\phi_{189,22}+\sigma(\phi_{120,25},1,1)+\gamma(E_7(a_4)),\\
\pi(E_7(a_4),1,g_2)&=\phi_{189,22}+\sigma(\phi_{120,25},1,g_2)+\gamma(E_7(a_4)),\\
\pi(E_7(a_4),g_2,1)&=\phi_{189,22}+\sigma(\phi_{120,25},g_2,1)+\gamma(E_7(a_4)),\\
\pi(E_7(a_4),g_2,g_2)&=\phi_{189,22}+\sigma(\phi_{120,25},g_2,g_2)+\gamma(E_7(a_4)),\\
\end{aligned}
\end{equation}
where $\gamma(E_7(a_4))=\sigma(\phi_{56,30},1,1)+\phi_{21,36}+\phi_{27,37}+\phi_{7,46}+\phi_{1,63}$, an $\FT$-self-dual character.

\subsection{$E_7(a_5)$} This is a distinguished orbit with $A_u=S_3$. The restrictions to $K=E_7(q)$ are:

\begin{equation}
\begin{aligned}
\pi(E_7(a_5),1,1)&=\sigma(\phi_{315,16},1,1)+ 3 \sigma(\phi_{120,25},1,1)+\sigma(\phi_{56,30},1,1)+3\phi_{210,21}+3\phi_{27,37}+3\phi_{7,46}+\gamma(E_7(a_5)),\\
\pi(E_7(a_5),1,g_2)&=\sigma(\phi_{315,16},1,g_2)+  \sigma(\phi_{120,25},1,1)+\sigma(\phi_{56,30},1,g_2)+\phi_{210,21}+\phi_{27,37}+\phi_{7,46}+\gamma(E_7(a_5)),\\
\pi(E_7(a_5),g_2,1)&=\sigma(\phi_{315,16},g_2,1)+  \sigma(\phi_{120,25},1,1)+\sigma(\phi_{56,30},g_2,1)+\phi_{210,21}+\phi_{27,37}+\phi_{7,46}+\gamma(E_7(a_5)),\\
\pi(E_7(a_5),g_2,g_2)&=\sigma(\phi_{315,16},g_2,g_2)+  \sigma(\phi_{120,25},1,1)+\sigma(\phi_{56,30},g_2,g_2)+\phi_{210,21}+\phi_{27,37}+\phi_{7,46}+\gamma(E_7(a_5)),\\
\end{aligned}
\end{equation}
\begin{equation}
\begin{aligned}
\pi(E_7(a_5),1,g_3)&=\sigma(\phi_{315,16},1,g_3)+\gamma(E_7(a_5)),\\
\pi(E_7(a_5),g_3,1)&=\sigma(\phi_{315,16},g_3,1)+\gamma(E_7(a_5)),\\
\pi(E_7(a_5),g_3,g_3)&=\sigma(\phi_{315,16},g_3,g_3)+\gamma(E_7(a_5)),\\
\pi(E_7(a_5),g_3,g_3^{-1})&=\sigma(\phi_{315,16},g_3,g_3^{-1})+\gamma(E_7(a_5)),\\
\end{aligned}
\end{equation}
where we denoted $\gamma(E_7(a_5))=\sigma(\phi_{56,30},1,\mathbf 1)+\sigma(\phi_{56,30},g_2,\mathbf 1)+\phi_{105,21}+\phi_{168,21}+\phi_{189,22}+\phi_{21,36}+\phi_{27,37}+\phi_{1,63},$
an $\FT$-self-dual combination.

\subsection{$\mathbf{E_6(a_1)}$} This is quasi-distinguished with $\Gamma_u=\mathsf O(2)$. Since we only consider representations of the split $p$-adic form, the only virtual character combinations that contribute with their restriction to $K_0$ are as follows
\begin{equation}
\begin{aligned}
\pi(E_6(a_1),1,\delta)&=\sigma(\phi_{120,25},1,g_2)+\sigma(\phi_{56,30},1,g_2)+\phi_{21,36}+\phi_{1,63},\\
\pi(E_6(a_1),\delta,1)&=\sigma(\phi_{120,25},g_2,1)+\sigma(\phi_{56,30},g_2,1)+\phi_{21,36}+\phi_{1,63},\\
\pi(E_6(a_1),\delta,\delta)&=\sigma(\phi_{120,25},g_2,g_2)+\sigma(\phi_{56,30},g_2,g_2)+\phi_{21,36}+\phi_{1,63}.\\
\end{aligned}
\end{equation}

\subsection{$\mathbf{A_4+A_1}$}\label{s:A4A1} This is the quasi-distinguished orbit with $\Gamma_u^0=T_2$ and $A_u=C_2$ discussed in section \ref{a4a1}. The $K_0$-structure of the representations $\pi(u,\delta,\phi)$, $\phi\in\widehat C_4$ is in \cite[p. 71]{Re2} under the notation $[A_1A_3^2,\phi]$. We note that there is a typo, the first  $K_0$-type for $[A_1A_3^2,-1]$ should be $D_4(11,1)=D_4[r\epsilon_2]$ (in the notation of \cite{Ca}).

To compute $\pi(u,1,\mathbf 1)$ and $\pi(u,1,\epsilon)$, we use the tables in \cite{BS} for the nilpotent $A_4+A_1$ in $E_7$. Notice that \[\pi(u,1,\mathbf 1)+\pi(u,1,\epsilon)=\Ind_{A_4+A_1}^{E_7}(\sgn).
\] 

In the notation below, $-1=\delta^2$. This is a central element in $\Gamma_u$, but not in $G^\vee$.  The centraliser of $\delta^2$ in $G^\vee$ is a pseudo-Levi of type $D_6+A_1$. The representations $\pi(u,-1,\phi)$ have $K_0$-structures:
\begin{align*}
\pi(u,-1,\mathbf 1)&=\Ind_{D_6+A_1}^{E_7}((211\times 11+21^3\times 1+21^4\times 0+1^3\times 1^3+1^6\times 0)\otimes (11)),\\
\pi(u,-1,\epsilon)&=\Ind_{D_6+A_1}^{E_7}((1^4\times 2+21\times 1^3)\otimes (11)).
\end{align*}
We emphasise that $\pi(u,-1,\mathbf 1)$, $\pi(u,-1,\epsilon)$, contain $\phi_{512,11}$ and $\phi_{512,12}$, respectively, with multiplicity one.

\smallskip

The $K_0$-restrictions of the virtual character combinations are:
\begin{equation}
\begin{aligned}
\pi(A_4A_1,1,\delta)&=-\sigma(\phi_{512,11},1,g_2)+\sigma(\phi_{420,13},1,g_2)+\sigma(\phi_{405,15},1,g_2)+\sigma(\phi_{56,30},1,g_2)+\gamma(A_4A_1),\\
\pi(A_4A_1,\delta,1)&=\sigma(\phi_{512,11},g_2,1)+\sigma(\phi_{420,13},g_2,1)+\sigma(\phi_{405,15},g_2,1)+\sigma(\phi_{56,30},g_2,1)++\gamma(A_4A_1),\\
\pi(A_4A_1,\delta,\delta)&=-i\sigma(\phi_{512,11},g_2,g_2)+\sigma(\phi_{420,13},g_2,g_2)+\sigma(\phi_{405,15},g_2,g_2)+\sigma(\phi_{56,30},g_2,g_2)++\gamma(A_4A_1),\\
\pi(A_4A_1,\delta,-\delta)&=i\sigma(\phi_{512,11},g_2,g_2)+\sigma(\phi_{420,13},g_2,g_2)+\sigma(\phi_{405,15},g_2,g_2)+\sigma(\phi_{56,30},g_2,g_2)+\gamma(A_4A_1),\\
\pi(A_4A_1,-1,\delta)&=\sigma(\phi_{512,11},1,g_2)+\sigma(\phi_{420,13},1,g_2)+\sigma(\phi_{405,15},1,g_2)+\sigma(\phi_{56,30},1,g_2)+\gamma(A_4A_1),\\
\pi(A_4A_1,\delta,-1)&=-\sigma(\phi_{512,11},g_2,1)+\sigma(\phi_{420,13},g_2,1)+\sigma(\phi_{405,15},g_2,1)+\sigma(\phi_{56,30},g_2,1)+\gamma(A_4A_1),\\
\end{aligned}
\end{equation}
where
\begin{equation}
\begin{aligned}
\gamma(A_4A_1)=&\sigma(\phi_{315,16},(1,\mathbf 1)+(g_2,\mathbf 1)+(g_3,\mathbf 1))+\sigma(\phi_{120,25},(1,\mathbf 1)+(g_2,\mathbf 1))+\sigma(\phi_{56,30},(1,\mathbf 1)+(g_2,\mathbf 1))\\
&+\phi_{210,13}+\phi_{189,20}+\phi_{105,21}+2\phi_{168,21}+2\phi_{189,22}+\phi_{21,36}+2\phi_{27,37}+\phi_{1,63},\\
\end{aligned}
\end{equation}
is a self-dual combination with respect to $\FT$.

\subsection{$\mathbf{D_5(a_1)+A_1}$} This is not a quasi-distinguished orbit. The centraliser is $\Gamma_u=\mathsf{PGL}(2)$, so that $\CY(\Gamma_u)=\Gamma_u\cdot (s,h)$. The centraliser of $s$ in $G^\vee=E_7$ is the pseudo-Levi subgroup of type $D_6+A_1$, and $u$ corresponds to the unipotent orbit $(7311)\times (11)$ in $D_6+A_1$. The $K_0$-structure of the two irreducible elliptic tempered modules is
\begin{align*}
\pi(D_5(a_1)+A_1,s,\mathbf 1)&=\Ind_{D_6+A_1}^{E_7}((1112\times 1+1^5\times 1+1^4\times 1^2+11112\times 0+1^6\times 0)\otimes (11)),\\
\pi(D_5(a_1)+A_1,s,\epsilon)&=\Ind_{D_6+A_1}^{E_7}((1^4\times 2+1^5\times 1)\otimes (11)).
\end{align*}
Using \cite{Al}, we find that the restriction of the virtual combination is
\begin{equation}
\begin{aligned}
\pi(D_5(a_1)A_1,s,h)=&\phi_{378,14}+\frac 12\sigma(\phi_{315,16},(1,1)+(1,g_2)+(g_2,1)+(g_2,g_2))-\sigma(\phi_{405,15},1,1) \\&+ \frac 12 \sigma(\phi_{56,30},(g_2,1)+(1,g_2)+(g_2,g_2)-(1,1))\\
&+\sigma(\phi_{120,25},1,1)+\phi_{189,22}+\phi_{189,20}+\phi_{105,21}+2 \phi_{168,21}+3\phi_{27,37}+\phi_{1,63},
\end{aligned}
\end{equation}
which is indeed self-dual under the finite $\FT$.

\subsection{$\mathbf{A_3+A_2+A_1}$} This is not a quasi-distinguished class. The centraliser is $\Gamma_u=\mathsf{PGL}(2,\bC)$. The centraliser of the element $s$ is the pseudo-Levi subgroup of type $D_6+A_1$ and $u$ corresponds to the unipotent class $(5331)\times (2)$ in $D_6+A_1$. The $K_0$-structure of the two irreducible elliptic tempered modules is $\Ind_{D_6+A_1}^{E_7}(\pi_1\otimes (11))$ and $\Ind_{D_6+A_1}^{E_7}(\pi_2\otimes (11))$, respectively, where
\begin{align*}\pi_1|_{D_6}&=221\times 1+211\times 11+2111\times 1+1^4\times 11+1^3\times 1^3+1^5\times 1+2211\times 0+21^4\times 0+1^6\times 0,\\
\pi_2|_{D_6}&=222\times 0.
\end{align*}
We find the $K_0$-restriction is self-dual with respect to $\FT$:
\begin{equation}
\begin{aligned}
\pi(A_3A_2A_1,s,h)&=\phi_{210,10}+\sigma(\phi_{512,11},(1,1))+\sigma(\phi_{420,13},(1,\mathbf 1)+(g_2,\mathbf 1))+2\sigma(\phi_{405,15},(1,\mathbf 1)+(g_2,\mathbf 1))\\
&+\sigma(\phi_{315,16},3(1,\mathbf 1)+3(g_2,\mathbf 1)+(1,r)+2 (g_3,\mathbf 1))+2\sigma(\phi_{120,25},1,1)\\
&+\sigma(\phi_{120,25},(1,\mathbf 1)+(g_2,\mathbf 1))+3\sigma(\phi_{56,30},(1,\mathbf 1)+(g_2,\mathbf 1))+(...),
\end{aligned}
\end{equation}
where $(...)$ is a sum of one-element families.

\section{$\mathbf{E_8}$}
We give the $E_8(q)$-restrictions for the virtual character combinations of $E_8$. In place of $\phi_{d,e}$ we write $d_e$ for an irreducible $W(E_8)$-representation.

\subsection{Distinguished classes, $\mathbf{A_u=1}$} There are three orbits: $E_8$, $E(a_1)$, $E_8(a_2)$. The restrictions are all sums of one-element families, hence $\FT$-self-dual:

\begin{equation}
\begin{aligned}
\pi(E_8,(1,1))&=1_{120},\\
\pi(E_8(a_1),(1,1))&=8_{91}+1_{120},\\
\pi(E_8(a_2),(1,1))&=35_{74}+8_{91}+1_{120}.
\end{aligned}
\end{equation}

\subsection{Distinguished classes, $\mathbf{A_u=C_2}$} There are four classes: $E_8(a_3)$, $E_8(a_4)$, $E_8(b_4)$, $E_8(a_5)$. The restrictions are as follows:

\begin{equation}
\begin{aligned}
\pi(E_8(a_3),(1,1))&=\sigma(112_{63},(1,1))+ 2\cdot 8_{91}+ \gamma(E_8(a_3),\\
\pi(E_8(a_3),(1,g_2))&=\sigma(112_{63},(1,g_2))+  \gamma(E_8(a_3),\\
\pi(E_8(a_3),(g_2,1))&=\sigma(112_{63},(g_2,1))+  \gamma(E_8(a_3),\\
\pi(E_8(a_3),(g_2,g_2))&=\sigma(112_{63},(g_2,g_2))+ \gamma(E_8(a_3),\\
\end{aligned}
\end{equation}
where $ \gamma(E_8(a_3)=35_{74}+1_{120}$ is $\FT$-self-dual.

\

\begin{equation}
\begin{aligned}
\pi(E_8(a_4),(1,1))&=\sigma(210_{52},(1,1))+\sigma(112_{63},(1,1))+ 2\cdot 8_{91}+ 2\cdot 35_{74}+2\cdot 8_{91}+1_{120},\\
\pi(E_8(a_4),(1,g_2))&=\sigma(210_{52},(1,g_2))+\sigma(112_{63},(1,g_2))+ 1_{120},\\
\pi(E_8(a_4),(g_2,1))&=\sigma(210_{52},(g_2,1))+\sigma(112_{63},(g_2,1))+ 1_{120},\\
\pi(E_8(a_4),(g_2,g_2))&=\sigma(210_{52},(g_2,g_2))+\sigma(112_{63},(g_2,g_2))+ 1_{120}.\\
\end{aligned}
\end{equation}

\

\begin{equation}
\begin{aligned}
\pi(E_8(b_4),(1,1))&=560_{47}+\sigma(210_{52},(1,\mathbf 1)+(g_2,\mathbf 1))+  \gamma(E_8(b_4)),\\
\pi(E_8(b_4),(1,g_2))&=560_{47}+\sigma(210_{52},(1,\mathbf 1)-(g_2,\mathbf 1))  +  \gamma(E_8(b_4)),\\
\pi(E_8(b_4),(g_2,1))&=560_{47}+\sigma(210_{52},(1,\epsilon)+(g_2,\epsilon))+  \gamma(E_8(b_4)),\\
\pi(E_8(b_4),(g_2,g_2))&=560_{47}+\sigma(210_{52},(1,\epsilon)-(g_2,\epsilon))+ \gamma(E_8(b_4)),\\
\end{aligned}
\end{equation}
where $ \gamma(E_8(b_4))=\sigma(112_{63},(1,\mathbf 1)+(g_2,\mathbf 1))+ 35_{74}+8_{91}+1_{120}$ is $\FT$-self-dual. Notice that for a family $\CF$ such that $\Gamma_\CF=C_2$, the combinations $\sigma(1,\mathbf 1)+\sigma(g_2,\mathbf 1)$ and $\sigma(1,\epsilon)-\sigma(g_2,\epsilon)$ are self-dual, while \[\FT(\sigma(1,\mathbf 1)-\sigma(g_2,\mathbf 1))=\sigma(1,\epsilon)+\sigma(g_2,\epsilon).\]

\

\begin{equation}
\begin{aligned}
\pi(E_8(a_5),(1,1))&=\sigma(700_{42},(1,1))+\sigma(210_{52},(1,1))+35_{74}+\gamma(E_8(a_5)),\\
\pi(E_8(a_5),(1,g_2))&=\sigma(700_{42},(1,g_2))+\sigma(210_{52},(1,g_2))+\gamma(E_8(a_5)),\\
\pi(E_8(a_5),(g_2,1))&=\sigma(700_{42},(g_2,1))+\sigma(210_{52},(g_2,1))+\gamma(E_8(a_5)),\\
\pi(E_8(a_5),(g_2,g_2))&=\sigma(700_{42},(g_2,g_2))+\sigma(210_{52},(g_2,g_2))+\gamma(E_8(a_5)),\\
\end{aligned}
\end{equation}
where $\gamma(E_8(a_5))=560_{47}+\sigma(112_{63},(1,\mathbf 1)+(g_2,\mathbf 1))+8_{91}+1_{120}$ is $\FT$-self-dual.

\subsection{Distinguished classes, $\mathbf{A_u=S_3}$} There are three classes: $E_8(b_5)$, $E_8(a_6)$, $E_8(b_6)$.

\begin{equation}
\begin{aligned}
\pi(E_8(b_5),(1,1)&=\sigma(1400_{37},1,1)+3\cdot 567_{46}+3\sigma(112_{63},1,1)+ 3 \sigma(210_{52},1,1)+3\cdot 35_{74}\\&+3\cdot 8_{91} +\gamma(E_8(b_5)) \\
\pi(E_8(b_5),1,g_2)&=\sigma(1400_{37},1,g_2)+567_{46}+ \sigma(112_{63},1,g_2)+ \sigma(210_{52},1,1)+35_{74}+\gamma(E_8(b_5)) \\
\pi(E_8(b_5),1,g_3)&=\sigma(1400_{37},1,g_3)+  \gamma(E_8(b_5)) \\
\pi(E_8(b_5),g_2,1)&=\sigma(1400_{37},g_2,1)+567_{46} +\sigma(112_{63},g_2,1) + \sigma(210_{52},1,1)+35_{74} +\gamma(E_8(b_5)) \\
\pi(E_8(b_5),g_2,g_2)&= \sigma(1400_{37},g_2,g_2)+567_{46}+\sigma(112_{63},g_2,g_2) + \sigma(210_{52},1,1)+35_{74} +\gamma(E_8(b_5))\\
\pi(E_8(b_5),g_3,1)&=\sigma(1400_{37},g_3,1)  +\gamma(E_8(b_5))  \\
\pi(E_8(b_5),g_3,g_3)&= \sigma(1400_{37},g_3,g_3) +\gamma(E_8(b_5))  \\
\pi(E_8(b_5),g_3,g_3^{-1})&=\sigma(1400_{37},g_3,g_3^{-1}) +\gamma(E_8(b_5)) \\
\end{aligned}
\end{equation}
where $\gamma(E_8(b_5))=\sigma(112_{63},(1,\mathbf 1)+(g_2,\mathbf 1))+\sigma(700_{42},1,1)+560_{47}+35_{74}+1_{120}$ is $\FT$-self-dual. There is a typo in \cite{Re2}, for $[A_1E_7(a_2),+]$, there should be $-1400_{37}$ rather than $-35_{74}$.

\

\begin{equation}
\begin{aligned}
\pi(E_8(a_6),1,1)&=\sigma(1400_{32},1,1)+\sigma(1400_{37},1,1) + 3\sigma(210_{52},1,1)+3\sigma(112_{63},1,1)\\&+3\cdot 567_{46}+3\cdot 560_{47}
+3\cdot 35_{74}+3\cdot 8_{91}+\gamma(E_8(a_6)),\\
\pi(E_8(a_6),1,g_2)&=\sigma(1400_{32},1,g_2)+\sigma(1400_{37},1,g_2) +\sigma(210_{52},1,g_2)+\sigma(112_{63},1,g_2)\\&+567_{46} +560_{47}+ 35_{74}+ 8_{91}+\gamma(E_8(a_6),\\
\pi(E_8(a_6),1,g_3)&=\sigma(1400_{32},1,g_3)+\sigma(1400_{37},1,g_3)+\gamma(E_8(a_6),\\
\pi(E_8(a_6),g_2,1)&=\sigma(1400_{32},g_2,1)+\sigma(1400_{37},g_2,1) +\sigma(210_{52},g_2,1)+\sigma(112_{63},g_2,1)\\&+567_{46}+ 560_{47}+ 35_{74}+ 8_{91}+\gamma(E_8(a_6),\\
\pi(E_8(a_6),g_2,g_2)&=\sigma(1400_{32},g_2,g_2)+\sigma(1400_{37},g_2,g_2)+\sigma(210_{52},g_2,g_2)+\sigma(112_{63},g_2,g_2)\\&+567_{46}+ 560_{47}+ 35_{74}+ 8_{91} +\gamma(E_8(a_6),\\
\pi(E_8(a_6),g_3,1)&=\sigma(1400_{32},g_3,1)+\sigma(1400_{37},g_3,1) +\gamma(E_8(a_6),\\
\pi(E_8(a_6),g_3,g_3)&=\sigma(1400_{32},g_3,g_3)+\sigma(1400_{37},g_3,g_3) +\gamma(E_8(a_6),\\
\pi(E_8(a_6),g_3,g_3^{-1})&=\sigma(1400_{32},g_3,g_3^{-1})+\sigma(1400_{37},g_3,g_3^{-1}) +\gamma(E_8(a_6),\\
\end{aligned}
\end{equation}
where $\gamma(E_8(a_6)=\sigma(700_{42},(1,\mathbf 1)+(g_2,\mathbf 1))+\sigma(112_{63},(1,\mathbf 1)+(g_2,\mathbf 1))+1_{120}$ is $\FT$-self-dual.

\

For $E_8(b_6)$, we need the following observation about  $\FT$ for $S_3$-families, which follows immediately from the definition of $\FT$ in this case.

\begin{lemma}
Let $\CF$ be a family with $\Gamma_\CF=S_3$. The combinations 
\begin{align*}
\sigma(\CF, (1,\mathbf 1)+2(g_3,1)+(1,\epsilon)),\quad \sigma(\CF,(g_2,1)-(g_2,\epsilon)),\quad \sigma(\CF,(1,\mathbf 1)+(g_2,\mathbf 1)+(g_3,\mathbf 1)),\\
 \sigma(\CF,(1,\mathbf 1)+(g_2,\mathbf 1)+(1,r)),\quad  \sigma(\CF,(1,r)+\theta^2(g_3,\theta)+\theta(g_3,\theta^2)),\quad  \sigma(\CF,(1,r)+\theta(g_3,\theta)+\theta^2(g_3,\theta^2))
\end{align*}
are fixed by $\FT$, and
\begin{align*}
\FT(\sigma(\CF,(1,\mathbf 1)-(g_3,\mathbf 1)+(1,\epsilon)))&=\sigma(\CF, (1,r)+(g_3,\theta)+(g_3,\theta^2)),\\
\FT(\sigma(\CF,(1,\mathbf 1)-(1,\epsilon)))&=\sigma(\CF, (g_2,\mathbf 1)+(g_2,\epsilon)).\\
\end{align*}

\end{lemma}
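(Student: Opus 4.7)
The proof is entirely a direct computation. The crucial observation is that, by the definition of $\FT$ and the fact that $\Delta(x,y)=1$ for every pair in an $S_3$-family, the transform acts on the virtual basis $\{\sigma(\CF,x,y)\}_{(x,y)\in S_3\backslash \CY(S_3)}$ by the involution $(x,y)\leftrightarrow (y,x)$. Consequently, a $\bC$-linear combination of irreducibles $\sigma(x,\rho)$ is $\FT$-fixed if and only if, after rewriting in the virtual basis, the resulting $S_3$-invariant function on $\CY(S_3)$ is symmetric in its two arguments. The eight $S_3$-orbits on $\CY(S_3)$ have representatives $(1,1), (1,g_2), (g_2,1), (1,g_3), (g_3,1), (g_2,g_2), (g_3,g_3), (g_3,g_3^{-1})$, and the involution fixes the four diagonal orbits while swapping $(1,g_2)\leftrightarrow (g_2,1)$ and $(1,g_3)\leftrightarrow (g_3,1)$. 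In particular the $\FT$-fixed subspace is six-dimensional, matching the number of combinations claimed to be fixed.

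The plan is then to (i) invert the formulas defining $\sigma(\CF,x,y)$ using orthogonality of characters of $Z_{S_3}(x)$, so that each irreducible is written as
\[
\sigma(x,\rho)=\sum_{[y]}\frac{\rho(y)}{|Z_{S_3}(x,y)|}\,\sigma(\CF,x,y),
\]
with the sum over conjugacy classes of $y$ in $Z_{S_3}(x)$; (ii) substitute this into each of the six claimed fixed combinations and verify symmetry of the resulting expansion in $(x,y)\leftrightarrow (y,x)$; (iii) for the two transformation identities, expand both sides in the virtual basis and match coefficients. Most checks are one-line: for instance, $\sigma(g_2,\mathbf 1)-\sigma(g_2,\epsilon)=\sigma(\CF,g_2,g_2)$ is tautologically fixed, and $\sigma(1,\mathbf 1)+2\sigma(g_3,\mathbf 1)+\sigma(1,\epsilon)$ collapses to $\tfrac{1}{3}\sigma(\CF,1,1)+\tfrac{2}{3}\bigl(\sigma(\CF,1,g_3)+\sigma(\CF,g_3,1)+\sigma(\CF,g_3,g_3)+\sigma(\CF,g_3,g_3^{-1})\bigr)$, which is manifestly symmetric.

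For the two transformation identities, both sides are expanded in the virtual basis and the match is verified term by term. The second identity rests on the one-line calculations $\sigma(1,\mathbf 1)-\sigma(1,\epsilon)=\sigma(\CF,1,g_2)$ and $\sigma(g_2,\mathbf 1)+\sigma(g_2,\epsilon)=\sigma(\CF,g_2,1)$, which are exchanged by $(x,y)\leftrightarrow (y,x)$. The first identity is slightly more involved: both sides acquire a nonzero $\sigma(\CF,1,1)$ contribution together with terms in the $g_3$-block, and the only non-trivial arithmetic input is the identity $\theta+\theta^{-1}=-1$, used to collect the coefficients of $\sigma(\CF,g_3,g_3^{\pm 1})$ on the right-hand side.

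The main obstacle is no more than careful bookkeeping, in particular tracking powers of $\theta$ and complex conjugation in the $C_3$-block $Z_{S_3}(g_3)$; there is no conceptual difficulty. As a sanity check, the statement is equivalent to the well-known Fourier matrix for the $S_3$-family tabulated in \cite{orange}, so an alternative proof is simply to read off the fixed and transformed vectors directly from that matrix.
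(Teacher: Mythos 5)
Your proposal is correct and coincides with the paper's argument: the paper dismisses the lemma with "follows immediately from the definition of $\FT$," and you have simply written out that immediate verification, namely that $\FT(\sigma(\CF,x,y))=\sigma(\CF,y,x)$ (using $\Delta\equiv 1$ for $S_3$-families) together with the inversion $\sigma(x,\rho)=\sum_{[y]}\rho(y)|Z_{S_3}(x,y)|^{-1}\sigma(\CF,x,y)$. The spot checks you carry out (e.g.\ $\sigma(g_2,\mathbf 1)-\sigma(g_2,\epsilon)=\sigma(\CF,g_2,g_2)$ and the $\theta+\theta^2=-1$ collapse in the $g_3$-block) are the right ones, and the observation that $(g_3,g_3^{-1})$ is swap-invariant as an $S_3$-orbit correctly accounts for the $6$-dimensional fixed subspace.
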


The $K_0$-restrictions are:
\begin{equation}
\begin{aligned}
\pi(E_8(b_6),1,1)&=\sigma(2240_{28},1,1)+ \sigma(1400_{32},(1,\mathbf 1)+2(g_3,\mathbf 1)+(1,\epsilon))+\sigma(700_{42},1,1)\\&+\sigma(210_{52},1,1)+\gamma(E_8(b_6)),\\
\pi(E_8(b_6),1,g_2)&=\sigma(2240_{28},1,g_2)+ \sigma(1400_{32},(1,\mathbf 1)-(1,\epsilon))+\sigma(700_{42},1,g_2)\\
&+\sigma(210_{52},1,g_2)+\gamma(E_8(b_6)),\\
\pi(E_8(b_6),1,g_3)&=\sigma(2240_{28},1,1)+ \sigma(1400_{32},(1,\mathbf 1)-(g_3,\mathbf 1)+(1,\epsilon))+\sigma(700_{42},1,1)\\
&+\sigma(210_{52},1,1)+\gamma(E_8(b_6)),\\
\pi(E_8(b_6),g_2,1)&=\sigma(2240_{28},g_2,1)+ \sigma(1400_{32},(g_2,\mathbf 1)+(g_2,\epsilon))+\sigma(700_{42},g_2,1)\\
&+\sigma(210_{52},g_2,1)+\gamma(E_8(b_6)),\\
\pi(E_8(b_6),g_2,g_2)&=\sigma(2240_{28},g_2,g_2)+ \sigma(1400_{32},(g_2,\mathbf 1)-(g_2,\epsilon))+\sigma(700_{42},g_2,g_2)+\\&\sigma(210_{52},g_2,g_2)+\gamma(E_8(b_6)),\\
\pi(E_8(b_6),g_3,1)&=\sigma(2240_{28},1,1)+ \sigma(1400_{32},(1,r)+(g_3,\theta)+(g_3,\theta^2))+\sigma(700_{42},1,1)\\ 
&+\sigma(210_{52},1,1)+\gamma(E_8(b_6)),\\
\pi(E_8(b_6),g_3,g_3)&=\sigma(2240_{28},1,1)+ \sigma(1400_{32},(1,r)+\theta^2(g_3,\theta)+\theta(g_3,\theta^2))+\sigma(700_{42},1,1)&\\
&+\sigma(210_{52},1,1)+\gamma(E_8(b_6)),\\
\pi(E_8(b_6),g_3,g_3^{-1})&=\sigma(2240_{28},1,1)+ \sigma(1400_{32},(1,r)+\theta(g_3,\theta)+\theta^2(g_3,\theta^2))+\sigma(700_{42},1,1)&
\\&+\sigma(210_{52},1,1)+\gamma(E_8(b_6)),\\
\end{aligned}
\end{equation}
where $\gamma(E_8(b_6))=3240_{31}+567_{46}+2\cdot 560_{47}+\sigma(1400_{37},(1,\mathbf 1)+(g_2,\mathbf 1)+(g_3,\mathbf 1))+ 2 \sigma(112_{63},(1,\mathbf 1)+(g_2,\mathbf 1))+ \sigma(700_{42},(1,\mathbf 1)+(g_2,\mathbf 1)+35_{74}+8_{91}+1_{120}$, which is an $\FT$-stable combination.

\subsection{$\mathbf{E_8(a_7)}$} This is the distinguished class with $A_u=S_5$. We list the triples $(s,Z_{G^\vee}(s), u)$ such that, $s\in A_u$, $\operatorname{Ad}(G^\vee)u=E_8(a_7)$, and $u$ is distinguished in the maximal pseudo-Levi $Z_{G^\vee}(u)$:
\begin{equation}
\begin{aligned}
(1, E_8,E_8(a_7)),\quad (g_2, E_7\times A_1,E_7(a_5)\times (2)),\quad (g_2', D_8, (7531)),\quad (g_4, D_5\times A_3,(73)\times (4)),
\\ \quad (g_3, E_6\times A_2, E_6(a_3)\times (3)),\quad (g_5,A_4\times A_4,(5)\times (5))\quad, (g_6,A_5A_2A_1,(6)\times (3)\times (2)).
\end{aligned}
\end{equation}
To compute the relevant $K_0$-restrictions, we use a combination of \cite{Re2}, \cite{Al}, and \cite{BS}. For example, the restrictions of the discrete series parameterised by $g_2'$ are induced from the discrete series parameterised by the unipotent $(7531)$ in $D_8$. The latter have the following $D_8(q)$-structure:
\begin{align*}
\pi((7531),1)&=2211\times 11+21^3\times 1^3+1^4\times 1^4+1^5\times 1^3+21^4\times 1^2+1^6\times 1^2+221^3\times 1+21^5\times 1\\&+1^7\times 1+221^4\times 0+21^6\times 0+1^8\times 0,\\
\pi((7531),\epsilon'')&=22\times 1^4+1^5\times 21+1^6\times 1^2,\\
\pi((7531),\epsilon')&=2^4\times 0,
\end{align*}
which then need to be induced to $E_8$.

Denote $\mathbf v_2=(1,\mathbf 1)+(g_2,\mathbf 1)$ for an $S_2$-family and $\mathbf v_3=(1,\mathbf 1)+(g_2,\mathbf 1)+(g_3,\mathbf 1)$ for an $S_3$-family, which are both $\FT$-invariant.

The restrictions for the virtual combinations are quite lengthy, for example,
\begin{equation}
\begin{aligned}
\pi(E_8&(a_7),1,1)=\sigma(4480_{16},1,1)+10 \sigma(5600_{21},1,1)+ 5\cdot 4200_{21}+ 2835_{22}+5\cdot 6075_{22}+2\cdot 4536_{23}+\sigma(4200_{24},\mathbf v_2)\\&+10\sigma(4200_{24},1,1)+15 \sigma(2800_{25},1,1)+25\sigma(2268_{30},1,1)+\sigma(2268_{30},\mathbf v_2)+5\cdot 525_{36}+5\sigma(2240_{28},1,1)\\
&+2\sigma(2240_{28},\mathbf v_2)) +15 \sigma(4096_{26},1,1)+10 \sigma(1400_{32},1,1)+5\sigma(1400_{32},(1,\mathbf 1)+(g_2,\mathbf 1)+(1,r))\\
&+\sigma(1400_{32},\mathbf v_3)+22\cdot 3240_{31}+10\sigma(1400_{37},1,1)+15\sigma(1400_{37},(1,\mathbf 1)+(g_2,\mathbf 1)+(1,r))+2\sigma(1400_{37},\mathbf v_3)\\
&+11\sigma(700_{42},1,1)+7\sigma(700_{42},\mathbf v_2)+20\sigma(210_{52},1,1)+\sigma(210_{52},\mathbf v_2)\\
&+10\sigma(112_{63},1,1)+7\sigma(112_{63},\mathbf v_2)+35\cdot 567_{46}+22\cdot 560_{47}+11\cdot 35_{74}+5\cdot 8_{91}+1_{120}.
\end{aligned}
\end{equation}
This is self-dual with respect to $\FT$. There is a typo in the Fourier transform $S_5$-matrix \cite[p. 459]{Ca}, the entry for $\{(g_2,-r),(g_2,1)\}$ should be $-\frac 16$ (compare to \cite{orange}). We will not give all the terms in the expressions below.

\medskip

\begin{equation}
\begin{aligned}
\pi(E_8(a_7),1,g_2)&=\sigma(4480_{16},1,g_2)+\sigma(5600_{21},1,g_2)+\sigma(2268_{30},1,g_2)+\sigma(1400_{32},1,g_2)\\
&+3\sigma(1400_{37},1,g_2)+\sigma(112_{63},1,g_2)+\gamma(1,g_2),
\end{aligned}
\end{equation}
where $\gamma(1,g_2)=3\sigma(5600_{21},1,1)+3\cdot 4200_{21}+2835_{22}+4 \sigma(4200_{24},1,1)+\sigma(4200_{24},\mathbf v_2)+\dots+5\cdot 35_{74}+3\cdot 8_{91}+1_{120}$ is self-dual.

\medskip

\begin{equation}
\begin{aligned}
\pi(E_8&(a_7),1,g_2')=\sigma(4480_{16},1,g_2')+2\sigma(5600_{21},1,g_2)+2\sigma(2800_{25},1,g_2)+4\sigma(2268_{30},1,g_2)\\
&+2\sigma(1400_{32},1,g_2)+2\sigma(1400_{37},1,g_2)+2\sigma(210_{52},1,g_2)+2\sigma(112_{63},1,g_2)+\gamma(1,g_2'),
\end{aligned}
\end{equation}
where $\gamma(1,g_2')=4200_{21}+2835_{22}+6075_{22}+2\sigma(4200_{24},1,1)+\sigma(4200_{24},\mathbf v_2)+\sigma(2800_{25},1,1)+\dots+3\sigma(112_{63},\mathbf v_2)+3\cdot 35_{74}+8_{91}+1_{120}$ is self-dual.

\medskip

\begin{equation}
\begin{aligned}
\pi(E_8(a_7),1,g_3)&=\sigma(4480_{16},1,g_3)+\sigma(1400_{37},1,g_3)+\gamma(1,g_3),
\end{aligned}
\end{equation}
where $\gamma(1,g_3)=\sigma(5600_{21},1,1)+2\cdot 4200_{21}+2835_{22}+2\cdot 6075_{22}+2\cdot 4536_{23}+\sigma(1400_{37},\mathbf v_3+3((1,r)-(g_3,\mathbf 1)))+ \sigma(1400_{32},1,1)+\sigma(1400_{32},3\mathbf v_3+(g_3,\mathbf 1)-(1,r))+\dots+2\cdot 35_{74}+2\cdot 8_{91}+1_{120}$ is self-dual.

\medskip

\begin{equation}
\begin{aligned}
\pi(E_8(a_7),1,g_4)&=\sigma(4480_{16},1,g_4)+\sigma(2268_{30},1,g_2)+\sigma(2800_{25},1,g_2)+\sigma(210_{52},1,g_2)+\gamma(1,g_4),
\end{aligned}
\end{equation}
where $\gamma(1,g_4)=4200_{21}+2835_{22}+6075_{22}+\sigma(4200_{24},\mathbf v_2)+\sigma(2240_{28},1,1)+...+35_{74}+8_{91}+1_{120}$ is self-dual.

\medskip

\begin{equation}
\begin{aligned}
\pi(E_8(a_7), 1,g_5)&=\sigma(4480_{16},1,g_5)+\gamma(1,g_5),
\end{aligned}
\end{equation}
where $\gamma(1,g_5)=2\sigma(1400_{37},\mathbf v_3)+\sigma(1400_{32},\mathbf v_3)+2\sigma(2240_{28},\mathbf v_2)+\dots+35_{74}+1_{120}$ is $\FT$-self-dual.

\medskip

\begin{equation}
\begin{aligned}
\pi(E_8(a_7),1,g_6)&=\sigma(4480_{16},1,g_6)+\sigma(5600_{21},1,g_2)+\sigma(1400_{37},(1,g_3))+\sigma(1400_{32},1,g_2)\\&+\sigma(2268_{30},1,g_2)+\sigma(112_{63},1,g_2)+\gamma(g_2,g_3),
\end{aligned}
\end{equation}
where 
$\gamma(g_2,g_3)=\sigma(2268_{30},\mathbf v_2)+\sigma(1400_{32},\mathbf v_3)+2\sigma(1400_{37},\mathbf v_3)+\sigma(1400_{37},(1,\mathbf 1)+(1,r)+(g_2,\mathbf 1))+2\sigma(112_{63},\mathbf v_2)+\dots+\sigma(210_{52},\mathbf v_2)+1_{120}$ is self-dual.

\medskip

For $0\le j<2$, 
\begin{equation}
\begin{aligned}
\pi(E_8(a_7),g_2,g_2^j)&=\sigma(4480_{16},g_2,g_2^j)+\sigma(5600_{21},g_2,g_2^{j})+\sigma(2268_{30},g_2,g_2^j)+\sigma(1400_{32},g_2,g_2^j)\\
&+3\sigma(1400_{37},g_2,g_2^j)+\sigma(112_{63},g_2,g_2^j)+\gamma(1,g_2),
\end{aligned}
\end{equation}
where $\gamma(1,g_2)$ is above.

\medskip

\begin{equation}
\begin{aligned}
\pi(E_8(a_7),g_2,\tau)&=\sigma(4480_{16},g_2,\tau)+\sigma(5600_{21},1,g_2)+\sigma(5600_{21},g_2,1)+\sigma(1400_{37},1,g_2)+\sigma(1400_{37},g_2,1)\\
&+\sigma(1400_{32},1,g_2)+\sigma(1400_{32},g_2,1)+\sigma(2268_{30},1,g_2)+\sigma(2268_{30},g_2,1)\\
&+\sigma(112_{63},g_2,1)+\sigma(112_{63},1,g_2)+\gamma(g_2,g_2'),\\
\pi(E_8(a_7),g_2,g_2')&=\sigma(4480_{16},g_2,g_2')+\sigma(5600_{21},1,g_2)+\sigma(5600_{21},g_2,g_2)+\sigma(1400_{37},1,g_2)+\sigma(1400_{37},g_2,g_2)\\
&+\sigma(1400_{32},1,g_2)+\sigma(1400_{32},g_2,g_2)+\sigma(2268_{30},1,g_2)+\sigma(2268_{30},g_2,g_2)\\
&+\sigma(112_{63},g_2,g_2)+\sigma(112_{63},1,g_2)+\gamma(g_2,g_2'),\\
\end{aligned}
\end{equation}
where $\gamma(g_2,g_2')=\sigma(2240_{28},1,1)+\sigma(2240_{28},\mathbf v_2)+2\cdot 4536_{23}+4200_{21}+3\sigma(4096,{26},1,1)+\dots+3\sigma(112_{63},\mathbf v_2)+35_{74}+1_{120}$ is self-dual.

\medskip

\begin{equation}
\begin{aligned}
\pi(E_8(a_7),g_2,g_3)&=\sigma(4480_{16},g_2,g_3)+\sigma(2268_{30},g_2,1)+\sigma(1400_{32},g_2,1)+\sigma(5600_{21},g_2,1)\\
&+\sigma(1400_{37},1,g_3)+\sigma(112_{63},g_2,1)+\gamma(g_2,g_3),\\
\pi(E_8(a_7),g_2,g_6)&=\sigma(4480_{16},g_2,g_6)+\sigma(2268_{30},g_2,g_2)+\sigma(1400_{32},g_2,g_2)+\sigma(5600_{21},g_2,g_2)\\
&+\sigma(1400_{37},1,g_3)+\sigma(112_{63},g_2,g_2)+\gamma(g_2,g_3),\\
\end{aligned}
\end{equation}
where $\gamma(g_2,g_3)$ is as above.

\

For $0\le j<2$,
\begin{equation}
\begin{aligned}
\pi(E_8&(a_7),g_2',(g_2')^j)=\sigma(4480_{16},g_2',(g_2')^j)+2\sigma(5600_{21},g_2,g_2^j)+2\sigma(2800_{25},g_2,g_2^j)+4\sigma(2268_{30},g_2,g_2^j)\\
&+2\sigma(1400_{32},g_2,g_2^j)+2\sigma(1400_{37},g_2,g_2^j)+2\sigma(210_{52},g_2,g_2^j)+2\sigma(112_{63},g_2,g_2^j)+\gamma(1,g_2'),
\end{aligned}
\end{equation}
where $\gamma(1,g_2')$ is as above.

\begin{equation}
\begin{aligned}
\pi(E_8(a_7),g_2',g_2)&=\sigma(4480_{16},g_2',g_2)+\sigma(5600_{21},g_2,1)+\sigma(5600_{21},g_2,g_2)+\sigma(1400_{37},g_2,1)+\sigma(1400_{37},g_2,g_2)\\
&+\sigma(1400_{32},g_2,1)+\sigma(1400_{32},g_2,g_2)+\sigma(2268_{30},g_2,1)+\sigma(2268_{30},g_2,g_2)\\
&+\sigma(112_{63},g_2,g_2)+\sigma(112_{63},g_2,1)+\gamma(g_2,g_2'),\\
\end{aligned}
\end{equation}
where $\gamma(g_2,g_2')$ is as above.

\begin{equation}
\begin{aligned}
\pi(E_8(a_7),g_2',g_4)&=\sigma(4480_{16},g_2',g_4)+\sigma(2268_{30},1,g_2)+\sigma(2800_{25},1,g_2)+\sigma(210_{52},1,g_2)+\gamma(1,g_4),\\
pi(E_8(a_7),g_2',\gamma)&=\sigma(4480_{16},g_2',\gamma)+\sigma(2268_{30},1,g_2)+\sigma(2268_{30},g_2,1)+\sigma(2268_{30},g_2,g_2)+\sigma(2800_{25},1,g_2)\\
&+\sigma(2800_{25},g_2,1)+\sigma(2800_{25},g_2,g_2)+\sigma(210_{52},\mathbf v_2)+\gamma(1,g_4),
\end{aligned}
\end{equation}
where $\gamma(1,g_4)$ is as above.

\

For $0\le j<3$,
\begin{equation}
\begin{aligned}
\pi(E_8(a_7),g_3,g_3^j)&=\sigma(4480_{16},g_3,g_3^j)+\sigma(1400_{37},g_3,g_3^j)+\gamma(1,g_3),\\
\pi(E_8(a_7),g_3,g_2g_3^j)&=\sigma(4480_{16},g_3,g_2g_3^j)+\sigma(5600_{21},1,g_2)+\sigma(1400_{37},(g_3,g_3^j))+\sigma(1400_{32},1,g_2)\\&+\sigma(2268_{30},1,g_2)+\sigma(112_{63},1,g_2)+\gamma(g_2,g_3),
\end{aligned}
\end{equation}
where $\gamma(1,g_3)$ and  $\gamma(g_2,g_3)$ are the self-dual expression as above. 

\medskip

For $0\le j<4$,
\begin{equation}
\begin{aligned}
\pi(E_8(a_7),g_4,g_4^j)&=\sigma(4480_{16},g_4,g_4^j)+\sigma(2268_{30},g_2,g_2^j)+\sigma(2800_{25},g_2,g_2^j)+\sigma(210_{52},g_2,g_2^j)+\gamma(1,g_4),
\end{aligned}
\end{equation}
where $\gamma(1,g_4)$ is as above.

\medskip

For $0\le j<5$, 

\begin{equation}
\begin{aligned}
\pi(E_8(a_7), g_5,g_5^j)&=\sigma(4480_{16},g_5,g_5^j)+\gamma(1,g_5),
\end{aligned}
\end{equation}
where $\gamma(1,g_5)$ is as above.

\medskip

For $0\le j<6$,
\begin{equation}
\begin{aligned}
\pi(E_8(a_7),g_6,g_6^j)&=\sigma(4480_{16},g_6,g_6^j)+\sigma(5600_{21},g_2,g_2^j)+\sigma(1400_{37},(g_3,g_3^{j}))+\sigma(1400_{32},g_2,g_2^j)\\&+\sigma(2268_{30},g_2,g_2^j)+\sigma(112_{63},g_2,g_2^j)+\gamma(g_2,g_3),
\end{aligned}
\end{equation}
where $\gamma(g_2,g_3)$ is the self-dual expression as above. Recall that $g_6=g_2 g_3$, so $g_3=g_6^{-2}$.

\subsection{$\mathbf{D_7(a_1)}$ and $\mathbf{D_5+A_2}$} These are quasi-distinguished classes with $\Gamma_u=\mathsf O(2)$. Firstly, consider $D_7(a_1)$. The $8$ elliptic tempered representations that enter in the $6$ virtual combinations are $\pi(D_7(a_1),1,\mathbf 1)$ and $\pi(D_7(a_1),1,\epsilon)$ for which we use the tables in \cite{BS}, the $4$ discrete series representations corresponding to the pseudo-Levi $E_7+A_1$ and $u=E_7(a_3)+A_1$ (these are listed in \cite{Re2}, but there are two typos: $[A_1E_7(a_3),++]$ should also contain one $1008_{39}$ and $[A_1E_7(a_3),++]$ should also contain one $28_{68}$, this can be checked against the induction tables in \cite{Al}), and finally $\pi(D_7(a_1),-1,\mathbf 1)$ and $\pi(D_7(a_1),-1,\epsilon)$. The $K_0$-restrictions of these last two are as follows:

\begin{align*}
\pi(D_7(a_1),-1,\mathbf 1)&=\Ind_{D_8}^{E_8}( 21^5\times 1+1^7\times 1+1^6\times 1^2+21^6\times 0+1^8\times 0),\\
\pi(D_7(a_1),-1,\epsilon)&=\Ind_{D_8}^{E_8}(1^6\times 2+1^7\times 1),
\end{align*}
coming from the unipotent class $(11,3,1,1)$ in $D_8$.

The restrictions of the virtual character combinations are:

\begin{equation}
\begin{aligned}
\pi(D_7(a_1),1,\delta)&=(3240_{31})+\sigma(1400_{32},(1,\mathbf 1)-(g_2,\mathbf 1)+(1,r))+\sigma(1400_{37},(1,\mathbf 1)+(g_2,\mathbf 1)+(1,r))\\
&+\sigma(700_{42},1,1)+\sigma(700_{42},1,g_2)+\sigma(210_{52},1,1)+\sigma(210_{52},1,g_2)+\gamma(D_7(a_1)),\\
\pi(D_7(a_1),-1,\delta)&=(3240_{31})-\sigma(1400_{32},(1,\mathbf 1)+(1,r)-(g_2,\mathbf 1))+\sigma(1400_{37},(1,\mathbf 1)+(g_2,\mathbf 1)+(1,r))\\
&+\sigma(700_{42},1,1)+\sigma(700_{42},1,g_2)+\sigma(210_{52},(g_2,\mathbf 1)+(1,\epsilon))+\sigma(112_{63},1,g_2)\\
&+\sigma(112_{63},g_2,1)+\sigma(112_{63},g_2,g_2)+560_{47}+2\cdot 35_{74}+1_{120},\\
\pi(D_7(a_1),\delta,1)&=(3240_{31})+\sigma(1400_{32},(1,r)+(1,\epsilon)+(g_2,\epsilon))+\sigma(1400_{37},(1,\mathbf 1)+(g_2,\mathbf 1)+(1,r))\\
&+\sigma(700_{42},1,1)+\sigma(700_{42},g_2,1)+\sigma(210_{52},1,1)+\sigma(210_{52},g_2,1)+\gamma(D_7(a_1)),\\
\pi(D_7(a_1),\delta,\delta)&=(3240_{31})+\sigma(1400_{32},(1,r)+(1,\epsilon)-(g_2,\epsilon))+\sigma(1400_{37},(1,\mathbf 1)+(g_2,\mathbf 1)+(1,r))\\
&+\sigma(700_{42},1,1)+\sigma(700_{42},g_2,g_2)+\sigma(210_{52},1,1)+\sigma(210_{52},g_2,g_2)+\gamma(D_7(a_1)),\\
\pi(D_7(a_1),\delta,-1)&=(3240_{31})-\sigma(1400_{32},(1,r)+(1,\epsilon)+(g_2,\epsilon))+\sigma(1400_{37},(1,\mathbf 1)+(g_2,\mathbf 1)+(1,r))\\
&+\sigma(700_{42},1,1)+\sigma(700_{42},g_2,1)+\sigma(210_{52},(1,\mathbf 1)-(g_2,\epsilon))+\sigma(112_{63},1,g_2)\\
&+\sigma(112_{63},g_2,1)+\sigma(112_{63},g_2,g_2)+560_{47}+2\cdot 35_{74}+1_{120},\\
\pi(D_7(a_1),\delta,-\delta)&=(3240_{31})-\sigma(1400_{32},(1,r)+(1,\epsilon)-(g_2,\epsilon))+\sigma(1400_{37},(1,\mathbf 1)+(g_2,\mathbf 1)+(1,r))\\
&+\sigma(700_{42},1,1)+\sigma(700_{42},g_2,g_2)+\sigma(210_{52},(1,\mathbf 1)+(g_2,\epsilon))+\sigma(112_{63},1,g_2)\\
&+\sigma(112_{63},g_2,1)+\sigma(112_{63},g_2,g_2)+560_{47}+2\cdot 35_{74}+1_{120}.\\
\end{aligned}
\end{equation}
Here $\gamma(D_7(a_1))=2\cdot 567_{46}+3\cdot 560_{47}+2\sigma(112_{63},1,1)+2\sigma(112_{63},\mathbf v_2)+2\cdot 35_{74}+2\cdot 8_{91}+1_{120}$ is self-dual.

\

Next, it's $D_5+A_2$. The $8$ elliptic tempered representations that enter in the $6$ virtual combinations are $\pi(D_5A_2,1,\mathbf 1)$ and $\pi(D_5A_2,1,\epsilon)$ for which we use the tables in \cite{BS}, the $4$ discrete series representations corresponding to the pseudo-Levi $E_7+A_1$ and $u=E_7(a_4)+A_1$ (these are listed in \cite{Re2}), and finally $\pi(D_5A_2,-1,\mathbf 1)$ and $\pi(D_5A_2,-1,\epsilon)$. The $K_0$-restrictions of these last two are as follows:

\begin{align*}
\pi(D_5A_2,-1,\mathbf 1)&=\Ind_{D_8}^{E_8}( 221^3\times 1+21^5\times 1+1^7\times 1+1^6\times 11+21^4\times 11+1^5\times 1^3+221^4\times 0\\&+21^6\times 0+1^8\times 0),\\
\pi(D_5A_2,-1,\epsilon)&=\Ind_{D_8}^{E_8}(2^311\times 0),
\end{align*}
coming from the unipotent class $(9,3,3,1)$ in $D_8$.

The restrictions of the virtual character combinations are:

\begin{equation}
\begin{aligned}
\pi(D_5A_2,1,\delta)&=4536_{23}+\sigma(4200_{24},(1,\mathbf 1)-(g_2,\mathbf 1))+\sigma(2240_{28},1,g_2)+\sigma(700_{42},1,g_2)+\gamma(D_5A_2),\\
\pi(D_5A_2,-1,\delta)&=4536_{23}+\sigma(4200_{24},(1,\mathbf 1)-(g_2,\mathbf 1))+\sigma(2240_{28},(g_2,\mathbf 1)+(1,\epsilon))-\sigma(700_{42},1,g_2)+\gamma(D_5A_2),\\
\pi(D_5A_2,\delta,1)&=4536_{23}+\sigma(4200_{24},(1,\epsilon)+(g_2,\epsilon))+\sigma(2240_{28},g_2,1)+\sigma(700_{42},g_2,1)+\gamma(D_5A_2),\\
\pi(D_5A_2,\delta,\delta)&=4536_{23}+\sigma(4200_{24},(1,\epsilon)-(g_2,\epsilon))+\sigma(2240_{28},g_2,g_2)+\sigma(700_{42},g_2,g_2)+\gamma(D_5A_2),\\
\pi(D_5A_2,\delta,-\delta)&=4536_{23}+\sigma(4200_{24},(1,\epsilon)-(g_2,\epsilon))-\sigma(2240_{28},1,g_2)-\sigma(700_{42},1,g_2)+\gamma(D_5A_2),\\
\pi(D_5A_2,\delta,-1)&=4536_{23}+\sigma(4200_{24},(1,\epsilon)+(g_2,\epsilon))+\sigma(2240_{28},(1,\mathbf 1)-(g_2,\epsilon))-\sigma(700_{42},g_2,1)+\gamma(D_5A_2),\\
\end{aligned}
\end{equation}
where $\gamma(D_5A_2)=\sigma(2240_{28},\mathbf v_2)+\sigma(4096_{26},1,1)+\dots+3\sigma(112_{63},\mathbf v_2)+2\cdot 35_{74}+8_{91}+1_{120}$ is self-dual.

\subsection{$\mathbf{E_6(a_1)+A_1}$ and $\mathbf{D_7(a_2)}$} These are quasi-distinguished classes with \[\Gamma_u=\langle z,\delta\mid z\in \bC^\times,\ \delta z\delta^{-1}=z^{-1},\ \delta^2=-1\rangle.\]

First, we consider $E_6(a_1)+A_1$. With the notation as in section \ref{s:E6(a1)A1}, the relevant centralisers are $Z_{G^\vee}(-1)=E_7\times A_1$ and $Z_{G^\vee}(\delta)=A_7+A_1$ and $u$ corresponds to $E_6(a_1)+A_1$ and the principal unipotent class, respectively. The restrictions of the discrete series representations parameterised by $A_7+A_1$ are in \cite{Re2}. For the other elliptic tempered modules 
\[\pi(E_6(a_1)A_1,1,\mathbf 1),\ \pi(E_6(a_1)A_1,1,\epsilon)
\]
we use the tables in \cite{BS}, and for 
\[ \pi(E_6(a_1)A_1,-1,\phi)=\Ind_{E_7\times A_1}^{E_8}(\pi(u,1,\phi)\otimes (11)),\quad \phi=\mathbf 1,\epsilon,
\]
the $K_0$-structure of $\pi(E_6(a_1)+A_1,\phi)$ in $E_7$ and the induction tables of \cite{Al}.

The virtual character combinations are:

\begin{equation}
\begin{aligned}
\pi(E_6&(a_1)A_1,\pm 1,\delta)=\mp\sigma(4096_{26},1,g_2)+\sigma(2240_{28},1,g_2)+\sigma(2268_{30},1,g_2)+\sigma(1400_{32},1,g_2)+\sigma(210_{52},1,g_2)\\&+\sigma(112_{62},1,g_2)+\gamma(E_6(a_1)A_1),\\
\pi(E_6&(a_1)A_1,\delta,\pm1)=\pm\sigma(4096_{26},g_2,1)+\sigma(2240_{28},g_2,1)+\sigma(2268_{30},g_2,1)+\sigma(1400_{32},g_2,1)+\sigma(210_{52},g_2,1)\\
\pi(E_6&(a_1)A_1,\delta,\pm\delta)=\mp i\sigma(4096_{26},g_2,g_2)+\sigma(2240_{28},g_2,g_2)+\sigma(2268_{30},g_2,g_2)+\sigma(1400_{32},g_2,g_2)\\
&+\sigma(210_{52},g_2,g_2)+\sigma(112_{62},g_2,g_2)+\gamma(E_6(a_1)A_1),\\
\end{aligned}
\end{equation}
where $\gamma(E_6(a_1)A_1)=\sigma(1400_{37},\mathbf v_3)+\sigma(700_{42},\mathbf v_2)+\sigma(112_{63},\mathbf v_2)+560_{47}+35_{74}+1_{120}$ is self-dual.

\

Now, take $D_7(a_2)$. With the notation as in section \ref{s:E6(a1)A1}, the relevant centralisers are $Z_{G^\vee}(-1)=D_8$ and $Z_{G^\vee}(\delta)=D_5+A_3$ and $u$ corresponds to $(9,5,1,1)$ and the principal unipotent class, respectively. The restrictions of the discrete series representations parameterised by $D_5+A_3$ are in \cite{Re2}. For the other elliptic tempered modules 
\[\pi(D_7(a_2),1,\mathbf 1),\ \pi(D_7(a_2),1,\epsilon)
\]
we use the tables in \cite{BS}, and for
\begin{align*}
\pi(D_7(a_2),-1,\mathbf 1)&=\Ind_{D_8}^{E_8}(21^3\times 11+1^6\times 11+1^5\times 1^3+21^5\times 1+1^7\times 1+21^6\times 0+1^8\times 0),\\
\pi(D_7(a_2),-1,\epsilon)&=\Ind_{D_8}^{E_8}(1^5\times 21+1^6\times 2+1^6\times 11+1^7\times 1)
\end{align*}
the induction tables of \cite{Al}.

The virtual character combinations are:

\begin{equation}
\begin{aligned}
\pi(D_7&(a_2),\pm 1,\delta)=\sigma(4200_{24},1,g_2)\mp\sigma(4096_{26},1,g_2)+\sigma(2268_{30},1,g_2)+\sigma(112_{63},1,g_2)+\gamma(D_7(a_2)),\\
\pi(D_7&(a_2),\delta,\pm 1)=\sigma(4200_{24},g_2,1)\pm\sigma(4096_{26},g_2,1)+\sigma(2268_{30},g_2,1)+\sigma(112_{63},g_2,1)+\gamma(D_7(a_2)),\\
\pi(D_7&(a_2),\delta,\pm \delta)=\sigma(4200_{24},g_2,g_2)\mp i\sigma(4096_{26},g_2,g_2)+\sigma(2268_{30},g_2,g_2)+\sigma(112_{63},g_2,g_2)+\gamma(D_7(a_2)),\\
\end{aligned}
\end{equation}
where $\gamma(D_7(a_2))=\sigma(2240_{28},\mathbf v_2)+3240_{31}+\sigma(1400_{37},\mathbf v_3)+\sigma(700_{42},1,1)+\sigma(700_{42},\mathbf v_2)+\sigma(210_{52},\mathbf v_2)+\sigma(112_{63},\mathbf v_2)+560_{47}+35_{74}+1_{120}$ is self-dual.

\subsection{$\mathbf{A_6}$ and $\mathbf{A_4+A_2}$} The centraliser is as in section \ref{s:A6}. Firstly, consider $u=A_6$. The centraliser $Z_{G^\vee}(s)=D_8$ and the corresponding quasi-distinguished unipotent class is $(7711)$ in $D_8$. The two elliptic representations that occur in the virtual character combination have $K_0$-structure induced from the corresponding two elliptic tempered representations parameterised by $(7711)$ in $D_8$ and with Springer local systems. We get
\begin{equation}
\begin{aligned}
\pi(A_6,s,\mathbf 1)&=\Ind_{D_8}^{E_8}(21^3\times 1^3+1^5\times 1^3+1^4\times 1^4+21^4\times 11+1^6\times 11+21^5\times 1+1^7\times 1+21^6\times 0+1^8\times 0),\\
\pi(A_6,s,\epsilon)&=\Ind_{D_8}^{E_8}(211\times 1^4+1^5\times 1^3+1^5\times 21+1^6\times 1^2+1^6\times 2+1^7\times 1).
\end{aligned}
\end{equation}
We find
\begin{equation}
\begin{aligned}
\pi(A_6,s,h)&=4200_{21}-6075_{22}+\sigma(2800_{25},\mathbf v_2)+\sigma(2240_{28},\mathbf v_2)+\sigma(2240_{28},(g_2,\mathbf 1)-(1,\epsilon))+\dots+35_{74}+1_{120},
\end{aligned}
\end{equation}
an $\FT$-self-dual combination. 

\

For the unipotent class $A_4+A_2$, again the relevant centraliser is $Z_{G^\vee}(s)=D_8$ and the corresponding quasi-distinguished class is $(5533)$ in $D_8$. As before, the two elliptic representations that occur in the virtual character combination have $K_0$-structure induced from the corresponding two elliptic tempered representations parameterised by $(5533)$ in $D_8$ and with Springer local systems. We get
\begin{equation}
\begin{aligned}
\pi(A_4+A_2,s,\mathbf 1)&=\Ind_{D_8}^{E_8}(221\times 21+222\times 11+221\times 1^3+211\times 211+\dots+2\cdot 1^7\times 1+21^6\times 0+1^8\times 0),\\
\pi(A_4+A_2,s,\epsilon)&=\Ind_{D_8}^{E_8}(211\times 22+221\times 1^3+211\times 1^4+\dots).
\end{aligned}
\end{equation}
Denote
\[\mathbf u_5=2(1,\mathbf 1)+(1,\lambda^1)+2(g_3,\mathbf 1)+(g_3,\epsilon)+2(g_2',\mathbf 1)+(g_5,\mathbf 1)+2(g_2,\mathbf 1)+(g_6,\mathbf 1)+(g_2,r)+2\cdot (g_4,\mathbf 1),
\]
for the $S_5$-family. One verifies that $\FT(\mathbf u_5)=\mathbf u_5$.

We find
\begin{equation}
\begin{aligned}
\pi(A_4+A_2,s,h)&=4536_{13}-2835_{14}-6075_{14}+\sigma(5600_{15},\mathbf v_2)+4200_{15}+2100_{20}+\sigma(4480_16,\mathbf u_5)+\dots,
\end{aligned}
\end{equation}
an $\FT$-self-dual combination. 

\subsection{$\mathbf{A_4+2A_1}$}\label{s:A42A1} The centraliser $\Gamma_u$ is discussed in \ref{s:A4+2A1}.
For the $S_5$-family, denote
\begin{align*}
\mathbf v_5'=&2 (1,\mathbf 1)-(1,\lambda^2)+(1,\nu)+(1,\nu')+(g_5,\mathbf 1)+2(g_6,\mathbf 1)+(g_3,\mathbf 1)-(g_3,\epsilon)+2(g_2',\mathbf 1)+2(g_2,\mathbf 1),\\
\mathbf v_5''=&(1,\mathbf 1)+(1\nu)+(g_5,\mathbf 1)+(g_4,\mathbf 1)+(g_4,-\mathbf 1)+2(g_6,\mathbf 1)+(g_3,\mathbf 1)-(g_3,\epsilon)+2(g_2',\mathbf 1)\\
&+(g_2',\epsilon')+(g_2',\epsilon'')+(g_2',r)+2(g_2,\mathbf 1),\\
\mathbf v_5=&(1,\mathbf 1)+(1\nu)+(g_5,\mathbf 1)+(g_4,\mathbf 1)-(g_4,-\mathbf 1)+2(g_6,\mathbf 1)+(g_3,\mathbf 1)-(g_3,\epsilon)+2(g_2',\mathbf 1)\\
&+(g_2',\epsilon')+(g_2',\epsilon'')-(g_2',r)+2(g_2,\mathbf 1).\\
\end{align*}
Then
\[\FT(\mathbf v_5')=\mathbf v_5''\text{ and } \FT(\mathbf v_5)=\mathbf v_5.
\]

With the notation as in section \ref{s:A4+2A1}, the three virtual character combinations are:
\begin{align*}
\pi(A_4+2A_1,s,\delta)&=\pi(A_4+2A_1,s,\mathbf 1)-\pi(A_4+2A_1,s,\epsilon),\\
\pi(A_4+2A_1,\delta,s)&=\pi(A_4+2A_1,\delta,\mathbf 1)-\pi(A_4+2A_1,\delta,\epsilon_1)+\pi(A_4+2A_1,\delta,\epsilon_2)-\pi(A_4+2A_1,\delta,\epsilon),\\
\pi(A_4+2A_1,\delta,s)&=\pi(A_4+2A_1,\delta,\mathbf 1)-\pi(A_4+2A_1,\delta,\epsilon_1)-\pi(A_4+2A_1,\delta,\epsilon_2)+\pi(A_4+2A_1,\delta,\epsilon).
\end{align*}
The centralisers are $Z_{G^\vee}(s)=E_7+A_1$ and $Z_{G^\vee}(\delta)=D_5+A_3$. In $E_7+A_1$, the unipotent class is $A_4+A_1$ in $E_7$ and the principal class in $A_1$. This gives the $K_0$-restriction:
\[\pi(A_4+2A_1,s,\delta)=\Ind_{E_7\times A_1}^{E_8}((\pi_{E_7}(A_4+A_1,1,\mathbf 1)-\pi_{E_7}(A_4+A_1,1,\epsilon))\otimes (11)),
\]
and the right hand side is known from the $E_7$ calculations (and the induction tables). 

In $D_5+A_3$, the unipotent class is $(5311)$ in $D_5$ and the principal class in $A_3$. The restrictions of the first two elliptic representations are induced from corresponding elliptic representations for $D_5$ attached to $(5311)$ and the two Springer local systems:
\begin{equation}
\begin{aligned}
\pi(A_4+2A_1,\delta,\mathbf 1)&=\Ind_{D_5\times A_3}^{E_8}(211\times 1+1^3\times 1^2+1^4\times 1+21^3\times 0+1^5\times 0),\\
\pi(A_4+2A_1,\delta,\epsilon_1)&=\Ind_{D_5\times A_3}^{E_8}(1^3\times 2+1^4\times 1).
\end{aligned}
\end{equation}
The remaining two elliptic representations correspond to the affine Hecke algebra $\CH(E_8/D_4)$ of type $F_4$ (see [p. 80]\cite{Re2})  with parameters:
\begin{equation}
\CH(E_8/D_4):\quad 1 \relbar 1\relbar 1 \Rightarrow 4\relbar 4
\end{equation}
with semisimple parameter $\delta$ and furthermore to the graded affine Hecke algebra of type $B_3\times A_1$ with parameters on $B_3$:
\begin{equation}
\bH(B_3,2):\quad 1\relbar 1\Rightarrow 2.
\end{equation}
At the level of $\bH(B_3,2)$, the $W(B_3)$-structure of the two elliptic modules (limits of discrete series) is
\begin{equation}
\pi(B_3,(5311),+)=0\times 12+1\times 2\text{ and } \pi(B_3,(5311),-)=0\times 3.
\end{equation}
Inducing to $F_4$, we find the corresponding $K_0$-structures are:
\begin{equation}
\begin{aligned}
\pi(A_4+2A_1,\delta,\epsilon_2)&=\phi_{4,8}+\phi_{16,5}+\phi_{9,10}+\phi_{4,7}''+\phi_{8,9}''+\phi_{8,3}''+\phi_{9,6}'+\phi_{2,16}'',\\
\pi(A_4+2A_1,\delta,\epsilon)&=\phi_{1,12}''+\phi_{9,6}'+\phi_{2,4}''.
\end{aligned}
\end{equation}

Putting all of these calculations together we find:
\begin{equation}
\begin{aligned}
\pi(A_4&+2A_1,s,\delta)=-\sigma(4200_{12},1,g_2)-\sigma(5600_{15},(g_2,\mathbf 1)+(1,\epsilon))+\sigma(4480_{16},\mathbf v_5')+\sigma(5600_{21},1,g_2)\\
&+\sigma(2240_{28},1,g_2)+\sigma(2268_{30},1,g_2)+2\sigma(1400_{32},(1,\mathbf 1)+(g_2,\mathbf 1)-(1,\epsilon))+\sigma(700_{42},1,g_2)\\
&+\sigma(210_{52},1,g_2)+\gamma(A_4+2A_1),\\
\pi(A_4&+2A_1,\delta,s)=-\sigma(4200_{12},g_2,1)-\sigma(5600_{15},(1,\mathbf 1)-(g_2,\epsilon))+\sigma(4480_{16},\mathbf v_5'')+\sigma(5600_{21},g_2,1)\\
&+\sigma(2240_{28},g_2,1)+\sigma(2268_{30},g_2,1)+\sigma(1400_{32},(1,\mathbf 1)+3(g_2,\mathbf 1)-(1,\epsilon)+(g_2,\epsilon))\\&
+\sigma(700_{42},g_2,1)+\sigma(210_{52},g_2,1)+\gamma(A_4+2A_1),\\
\pi(A_4&+2A_1,\delta,s\delta)=-\sigma(4200_{12},g_2,g_2)-\sigma(5600_{15},(1,\mathbf 1)+(g_2,\epsilon))+\sigma(4480_{16},\mathbf v_5)+\sigma(5600_{21},g_2,g_2)\\
&+\sigma(2240_{28},g_2,g_2)+\sigma(2268_{30},g_2,g_2)+\sigma(1400_{32},(1,\mathbf 1)+3(g_2,\mathbf 1)-(1,\epsilon)-(g_2,\epsilon))\\&+\sigma(700_{42},g_2,g_2)+\sigma(210_{52},g_2,g_2)+\gamma(A_4+2A_1),\\
\end{aligned}
\end{equation}
where $\gamma(A_4+2A_1)=4200_{15}+4536_{13}+2100_{20}+\sigma(5600_{21},\mathbf v_2)+\dots+3\cdot 35_{74}+1_{120}$ is self-dual.

\subsection{$\mathbf{D_4(a_1)+A_2}$}. This is the non-quasi-distinguished unipotent class discussed in section \ref{s:D4A2}. The $K_0$-restriction of the virtual character combination is
\[
\pi(D_4(a_1)+A_2,s,g_3)=\Ind_{E_6\times A_2}^{E_8}(\pi_{E_6}(D_4(a_1)+A_2,1,g_3)),
\]
and the inducing data in the right hand side is known from the calculations in $E_6$. Set 
\begin{align*}\mathbf u_5'=&3(1,\mathbf 1)+(1,\lambda^1)+2(1,\nu)+2(g_5,\mathbf 1)+(g_4,\mathbf 1)+4(g_6,\mathbf 1)+3 (g_3,\mathbf 1)-(g_3,\epsilon)\\&+3(g_2',\mathbf 1)+(g_2',\epsilon'')+4(g_2,\mathbf 1)+(g_2,\epsilon)+(g_2,r),
\end{align*}
which has the property $\FT(\mathbf u_5')=\mathbf u_5'$.
We find

\begin{equation}
\begin{aligned}
\pi(D_4&(a_1)+A_2,s,g_3)=\sigma(2240_{10},1,1)+\sigma(2800_{13},1,1)+\sigma(5600_{15},(g_2,\mathbf 1)-(1,\epsilon))+3\cdot 4200_{15}\\&-\sigma(4200_{12},(g_2,\mathbf 1)-(1,\epsilon))+3\cdot 2835_{14}+\sigma(4096_{11},1,1)-2100_{20}+\sigma(4480_{16},\mathbf u'_5)+\dots,
\end{aligned}
\end{equation}
an $\FT$-self-dual expression.

\ifx\undefined\bysame
\newcommand{\bysame}{\leavevmode\hbox to3em{\hrulefill}\,}
\fi

\end{document}